\newtheorem{theorem}{Theorem}[section]
\newtheorem*{theorem*}{Theorem}
\newtheorem*{conjecture*}{Conjecture}
\newtheorem*{question*}{Question}
\newtheorem*{guess*}{Guess}
\newtheorem*{Assumption*}{Assumption}
\newtheorem*{problem*}{Problem}
\newtheorem{lemma}[theorem]{Lemma}
\newtheorem*{lemma*}{Lemma}
\newtheorem{proposition}[theorem]{Proposition}
\newtheorem*{proposition*}{Proposition}
\newtheorem{corollary}[theorem]{Corollary}
\newtheorem*{corollary*}{Corollary}
\theoremstyle{definition}
\newtheorem*{definition*}{Definition}
\newtheorem{remark}[theorem]{Remark}
\newtheorem*{remark*}{Remark}
\newtheorem{example}[theorem]{Example}
\newtheorem*{example*}{Example}
\newtheorem*{examples*}{Examples}
\newcommand{\onto}{\twoheadrightarrow}
\newcommand{\Hom}{\operatorname{Hom}}
\newcommand{\Res}{\operatorname{Res}}
\thanks{The author acknowledges funding from the Heilbronn Institute for Mathematical Research (HIMR) and the UK Engineering and Physical Sciences Research Council under the $\lq\lq$Additional Funding Programme for Mathematical Sciences". (Grant number: EP/V521917/1).}
\title{Contramodules for algebraic groups: induction and projective covers}
\author{Dylan Johnston}
\date{}
\begin{document}

\begin{abstract} 
\noindent{}In this paper we investigate contramodules for algebraic groups. Namely, we give contramodule analogs to two $20^{\textup{th}}$ century results about comodules. Firstly, we show that induction of contramodules over coordinate rings of algebraic groups is exact if and only if the associated quotient variety is affine. Secondly, we give an inverse limit theorem for constructing projective covers of simple $G$-modules using $G$-structures of projective covers of simple modules of the first Frobenius kernel, $G_1$. We conclude by showing that the inverse limit theorem is a special case of a more general phenomenon between injective covers in $k[G]$-Comod and projective covers in $k[G]$-Contra.
\end{abstract}

\maketitle

\section*{Introduction.}

Contramodules were introduced in a 1965 paper of Eilenberg and Moore alongside comodules \cite{eilenberg1965foundations}. However, they were largely neglected until the early 2000s, when Positselski mentioned them in a series of letters. Since then, research into contramodules has increased considerably, with special mention going once again to Positselski \cite{positselski2021contramodules}, 
 \cite{positselski2010homological}.

Let $(C, \Delta )$ be a coalgebra over a field $k$, then, as well as comodules, we have other $\lq\lq$module-like structures" on $C$, called contramodules. A $C$-contramodule is a $k$-vector space $B$ along with a linear map $\theta_B: \Hom_k(C,B) \longrightarrow B$ satisfying some compatibility conditions which we make explicit in the next section. Given any vector space $V$ we can form the free $C$-contramodule $\Hom_k(C,V)$, with the linear map $\Hom_k\big(C,\Hom_k(C,V)\big) \cong \Hom_k(C \otimes C, V) \longrightarrow \Hom_k(C,V)$ being given by comultiplication on $C$ in the first factor. We will see in the next section that contramodules of this form are projective, and so, in particular, the category $C$-Contra has enough projectives.

Furthermore, one may also speak of contramodules over a $k$-group scheme $G$. Let $k[G]$ be the coordinate ring of $G$, that is, $G(A) = \Hom_{k\text{-alg}}(k[G],A)$ for all $k$-algebras $A$. Then, it is well known that a $k[G]$-comodule $M$ is equivalent to a $k$-vector space $M$ along with an action of $G(A)$ on $M \otimes_k A$ for each $k$-algebra $A$ \cite[Chapter I.2.8]{jantzen2003representations}. One can similarly show that a $k[G]$-contramodule $M$ is equivalent to a $k$-vector space along with an action of $G(A)$ on $\Hom_k(A,M)$ for each $k$-algebra $A$.

In this paper, we state and prove two analogous results for contramodules that were originally stated in terms of comodules. First, we give the analog to the main result of Cline, Parshall, and Scott \cite{cline1977induced}. That is, we prove the following result.

\begin{theorem}
Let $H$ be a closed subgroup of an algebraic group $G$ with coordinate rings $k[H]$ and $k[G]$, respectively. Then the following are equivalent:
\begin{enumerate}[label=\roman*)]
    \item $G/H$ is an affine variety
    \item $H$ is contra-exact in $G$, that is, $\text{Ind}\,_{k[H]}^{k[G]}: k[H]$-Contra $\longrightarrow k[G]$-Contra is exact
    \item $H$ is exact in $G$, that is, $\text{ind}\,_{H}^G: H$-Mod $\longrightarrow G$-Mod is exact.
\end{enumerate}
\end{theorem}

This result shows that, just like comodules, contramodules of algebraic groups can detect geometric properties of the algebraic groups. Here, in particular, we see that the exactness of the contra-induction functor can detect the affineness of the quotient variety.

Our second result is an inverse limit theorem. This provides a way to construct projective covers of simple $G$-modules as an inverse limit of certain twisted tensor products of $G$-structures of projective covers of simple $G_1$-modules. As previously mentioned, we will see that the category of $k[G]$-contramodules has enough projectives, suggesting that this is the category we should work in to construct these covers. The result uses ideas from a 1980 note by Donkin, where a short proof was given of the direct limit theorem \cite{donkin1980}. Our result is the following: let $G$ be a simply connected semisimple algebraic group over an algebraically closed field $k$ with char$(k) = p$. For each dominant weight $\lambda$, let $L(\lambda)$ denote the simple $G$-module with highest weight $\lambda$. Let $G_r$ denote the $r^{th}$ Frobenius kernel of $G$.  Let $X_1$ be the set of $p$-restricted weights, and for each $\lambda \in X_1$ let $L_1(\lambda)$ denote the simple $G_1$ module with highest weight $\lambda$. Finally, let $P_1(\lambda)$ be the $G_1$-projective cover of $L_1(\lambda)$. Suppose that $P(\lambda)$ is a $G$-structure on $P_1(\lambda)$ for each $\lambda \in X_1$ (note that to date the existence of such a structure is only known in general for $p \geq 2h-4$, where $h$ is the Coxeter number of $G$ \cite{bendel2023donkins}).
\begin{theorem}
Let $\lambda$ be a dominant weight with $p$-adic expansion $\displaystyle \lambda = \sum_{i=0}^s \lambda_i\,p^i$ with each $\lambda_i \in X_1$. For $r>s$ define
\[ P_{\lambda,r} = P(\lambda_0) \otimes P(\lambda_1)^{(1)} \otimes \dots \otimes P(\lambda_s)^{(s)} \otimes P(0)^{(s+1)} \otimes \dots \otimes P(0)^{(r-1)}\] 
where the superscript $(i)$ denotes a twist by $i$ applications of the Frobenius morphism. Fix a projection $q: P(0) \longrightarrow L(0)$ and define $P_{\lambda} := \varprojlim P_{\lambda,r}$ in the category $k[G]$-Contra. Then $P_{\lambda}$ is the projective cover of the simple $G$-module, $L(\lambda)$, considered as a $k[G]$-contramodule.
\end{theorem}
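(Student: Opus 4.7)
The plan is to prove the theorem in four steps: (i) show each finite stage $P_{\lambda,m}$ restricts on $G_m$ to the projective cover of $L(\lambda)$ in $G_m$-Mod; (ii) verify the transition maps in the inverse system are surjections compatible with a canonical quotient to $L(\lambda)$; (iii) establish that the inverse limit $P_\lambda$ is projective in $k[G]$-contramodules; and (iv) show the induced surjection $P_\lambda \twoheadrightarrow L(\lambda)$ is essential.

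For (i), I would invoke the classical description (\cite[II.11]{jantzen2003representations}) of the $G_m$-projective cover of $L(\lambda)$ as the tensor product $P(1,\lambda_0)\otimes P(1,\lambda_1)^{Fr}\otimes\cdots\otimes P(1,\lambda_{m-1})^{Fr^{m-1}}$, where $\lambda_i=0$ for $s<i<m$. Our hypothesis that each $P(\lambda_i)$ is a $G$-structure on $P(1,\lambda_i)$ identifies this object with $P_{\lambda,m}$, whose $G$-head is $L(\lambda_0)\otimes L(\lambda_1)^{Fr}\otimes\cdots\otimes L(\lambda_s)^{Fr^s}\otimes L(0)^{Fr^{s+1}}\otimes\cdots = L(\lambda)$ by Steinberg's tensor product theorem. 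For (ii), the map $\pi_m := \mathrm{id}\otimes\cdots\otimes q^{Fr^m}$ is a surjective $G$-module map because $q$ is surjective; after the canonical identification $P_{\lambda,m}\otimes L(0)^{Fr^m}\cong P_{\lambda,m}$, the family $(\pi_m)_m$ commutes with the tensor-of-projections quotient $P_{\lambda,m}\twoheadrightarrow L(\lambda)$ built from $P(\lambda_i)\twoheadrightarrow L(\lambda_i)$.

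Steps (iii) and (iv) are where the contramodule framework becomes indispensable, and (iii) is the principal obstacle. Finite-dimensional rational $G$-modules are essentially never projective $k[G]$-contramodules for nontrivial $G$, so projectivity of $P_\lambda$ cannot be inherited stagewise and must arise from the inverse-limit completion itself; this is precisely why the theorem is stated in contramodules rather than in comodules or rational modules. I would appeal to the contramodule-theoretic principle (due to Positselski) that countable inverse limits of coherent projective systems with surjective transitions land inside the projective objects, and realise $P_\lambda$ concretely as a retract of a cofree contramodule $\Hom_k(k[G],V)$ assembled from the $P_{\lambda,m}$ and the maps $\pi_m$. Once projectivity is in hand, essentialness in (iv) follows by a standard lifting argument: any contramodule endomorphism of $P_\lambda$ covering $\mathrm{id}_{L(\lambda)}$ must be surjective (since it is so on each $P_{\lambda,m}\twoheadrightarrow L(\lambda)$, which is an essential surjection of $G_m$-modules), hence an isomorphism, so the kernel of $P_\lambda\twoheadrightarrow L(\lambda)$ contains no nonzero direct summand. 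Making step (iii) rigorous—pinning down the precise contramodule-theoretic limit theorem and verifying its hypotheses for our projective system over the coalgebra filtration $\{k[G_m]\}$—is the crux of the argument.
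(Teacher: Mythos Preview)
Your outline for steps (i), (ii), and (iv) is broadly compatible with the paper, but step (iii) contains a genuine gap, and it is precisely the step you flag as the crux. The ``contramodule-theoretic principle (due to Positselski)'' you invoke---that countable inverse limits of surjective systems of projectives are projective---does not apply here, because the terms $P_{\lambda,m}$ are \emph{not} projective $k[G]$-contramodules (as you yourself note, finite-dimensional rational modules almost never are). So there is no projective system in the relevant sense, and the proposed realisation of $P_\lambda$ as a retract of a free contramodule $\Hom_k(k[G],V)$ is simply a restatement of what must be proved, with no indication of how to produce the retraction.

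The paper's route to projectivity is quite different and does not pass through any abstract limit-of-projectives principle. It uses the characterisation (Lemma~\ref{Posit_equivalences}) that $P_\lambda$ is projective if and only if $\textup{Cohom}_{k[G]}(-,P_\lambda)$ is exact, then reduces to finite-dimensional comodules (Lemma~\ref{local_finiteness}). The key computational input is a Hom--Cohom duality (Lemma~\ref{duality_lemma}) combined with Donkin's stabilisation lemma: for any finite-dimensional $V$ and all large $m$, $\dim\textup{Cohom}_{k[G]}(V,P_{\lambda,m}) = f(V)$, the composition multiplicity of $L(\lambda)$ in $V$. A Mittag-Leffler argument (Lemma~\ref{lim_commutes_with_cohom}) then shows $\textup{Cohom}_{k[G]}(V,P_\lambda)=\varprojlim \textup{Cohom}_{k[G]}(V,P_{\lambda,m})$, so $\dim\textup{Cohom}_{k[G]}(V,P_\lambda)=f(V)$ as well. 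Exactness follows because $f$ is additive on short exact sequences. None of this structure---the duality, the dimension stabilisation at $f(V)$, or the Mittag-Leffler verification---appears in your sketch, and it is not replaceable by a general limit theorem.
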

This result gives us an explicit description of projective covers in the category $k[G]$-Contra. Moreover, it shows that taking limits of finite-dimensional modules is a contramodule, that is, if we work in the category $k[G]$-Contra, then we have control when taking limits. 

Finally, we prove the following result, showing that the above theorem is a special case of a more general phenomenon:

\begin{theorem}
Let $G$ be a simply connected semisimple algebraic group. Let $I(\lambda)$ be the rationally injective $G$-envelope of $L(\lambda)$. Suppose $I(\lambda) = \varinjlim_i X_i$, where the $X_i$ are finite-dimensional $G$-modules. Then $P(\lambda^*)$, the $k[G]$-Contra projective cover of $L(\lambda)^*$, the dual module of $L(\lambda)$, is isomorphic to $\varprojlim_i X_i^*$, as contramodules.    
\end{theorem}

The paper is organised into four main sections. In Section \ref{Section-contra-defns}, we begin with a short review of coalgebras and comodules. Then, we give the definition of a contramodule, and finally, provide some important constructions that will be used later in the paper. In Section \ref{Section-res-and-ind}, we give the construction of the induction functor for contramodules and prove that induction is left adjoint to the restriction functor. In Section \ref{Section-affine}, we state and prove our first main result, which relates the affineness of a quotient variety of two algebraic groups to the exactness of the contra-induction functor. Finally, in Section \ref{Section-inverse-limit}, using constructions introduced in Section \ref{Section-contra-defns}, we state and prove the two results regarding the construction of the projective covers of simple $G$-modules in the category $k[G]$-Contra.

\section*{Acknowledgements.}

I would like to thank my supervisor Professor Dmitriy Rumynin for introducing me to this topic, and for his continuous patience and guidance throughout. I would also like to thank Professor Stephen Donkin for suggesting that a more general relationship between injective covers and projective covers (c.f. Theorem \ref{Donkinconj}) may be occurring.

\section{Contramodules: definitions, examples and constructions.}\label{Section-contra-defns}

We begin by recalling the definitions of a coalgebra over a field and a comodule over a coalgebra \cite{sweedlerHopfAlgebras}. Let $k$ be a field. A coalgebra $(C,\Delta,\epsilon)$ is a $k$-vector space $C$ along with a linear comultiplication map $\displaystyle\Delta: C \longrightarrow C \otimes_k C$ and linear counit map $\epsilon: C \longrightarrow k$ which satisfy coassociativity and counity conditions. That is, the following diagrams commute:
\[
 \begin{tikzcd}[scale cd=1.1]
{C} \arrow[dd, "{\Delta}"] \arrow[rr, "{\Delta}"]  & & {C \otimes C} \arrow[dd, "\text{Id}_C \otimes \Delta"] \\
     & &       \\
    {C \otimes C} \arrow[rr, "\Delta \otimes \text{Id}_C"]  & & C \otimes C \otimes C
\end{tikzcd}
\hspace{10ex}
\begin{tikzcd}[scale cd=1.1]
{C} \arrow[ddrr, "{\cong}"] \arrow[rr, "{\Delta}"] \arrow[dd, "{\Delta}"]  & & {C \otimes C} \arrow[dd, "\text{Id}_C \otimes \epsilon"] \\
     & &       \\
    {C \otimes C} \arrow[rr, "\epsilon \otimes \text{Id}_C"]  & & k \otimes C \cong C \otimes k.
\end{tikzcd}\]
Recall that it is common to write the comultiplication of a coalgebra using Sweedler notation. That is, $\Delta(c) = \sum_{(c)}c_{(1)} \otimes c_{(2)}$, with coassociativity enabling us to write $\big((\text{Id}_C \otimes \Delta)\circ\Delta\big)(c) = \big((\Delta \otimes \text{Id}_C \Delta)\circ\Delta\big)(c) = \sum_{(c)} c_{(1)} \otimes c_{(2)} \otimes c_{(3)}.$ We will also make use of this notation throughout. A (right) $C$-comodule $(M,\Delta_M)$ is a $k$-vector space $M$ with a linear map $\Delta_M: M \longrightarrow M \otimes_k C$, called the coaction, such that the following diagrams commute:

\[
 \begin{tikzcd}[scale cd=1.1]
{M} \arrow[dd, "{\Delta_M}"] \arrow[rr, "{\Delta_M}"]  & & {M \otimes C} \arrow[dd, "\Delta_M \otimes \text{Id}_C"] \\
     & &       \\
    {M \otimes C} \arrow[rr, "\text{Id}_M \otimes \Delta"]  & & M \otimes C \otimes C \\
\end{tikzcd}
\hspace{10ex}
\begin{tikzcd}[scale cd=1.1]
{M} \arrow[ddrr, "{\cong}"] \arrow[rr, "{\Delta_M}"]& & {M \otimes C} \arrow[dd, "\text{Id}_M \otimes \epsilon"] \\
     & &       \\
     & & M \otimes k. \\
\end{tikzcd}\]
In Sweedler notation, the coaction map can be written as $\Delta_M(m) = \sum_{(m)} m_{(0)} \otimes m_{(1)}$ for any $m \in M$. One can also define left $C$-comodules in the obvious way. Given two right $C$-comodules, $M$ and $N$, let $\Hom_C(M,N)$ denote the space of comodule homomorphisms between $M$ and $N$, that is, the linear maps $f: M \longrightarrow N$ such that the following diagram commutes:
\[
 \begin{tikzcd}[scale cd=1.1]
{M} \arrow[dd, "{f}"] \arrow[rr, "{\Delta_M}"]  & & {M \otimes C} \arrow[dd, "f\otimes \text{Id}_C "] \\
     & &       \\
    {N} \arrow[rr, "\Delta_N"]  & & N \otimes C.
\end{tikzcd}
\]
Observe that $C$ is itself a $C$-comodule with coaction $\Delta_C := \Delta$. More generally, given any vector space $V$, define a comodule over $C$ given by the vector space $V \otimes_k C$ and coaction $\Delta_{V \otimes_k C} := \text{Id}_V \otimes \Delta$. We call such comodules the \textit{cofree comodules}, and one may verify that for any (right) $C$-comodule $W$ we have
\[\displaystyle\Hom_C(W, V \otimes_k C) \,\cong\,\Hom_k(W,V)\]
as $k$-vector spaces, where $\Hom_k(W,V)$ denotes the linear maps from $W$ to $V$. Therefore, the cofree comodules are injective, and it follows that a comodule $M$ is injective if and only if it is a direct summand of a cofree comodule.

Now, given a right $C$-comodule M and a left $C$-comodule $N$, we can form their cotensor product $M \boxtimes_C N$ which is a $k$-vector subspace of $M \otimes_k N$ given by the equaliser of $\Delta_M \otimes \text{Id}_N$ and $\text{Id}_M \otimes \Delta_N$, that is, the set of elements $a \in M \otimes_k N$ such that $(\Delta_M \otimes \text{Id}_N)(a) = (\text{Id}_M \otimes \Delta_N)(a):$
\[ 
\begin{tikzcd}[scale cd=1, sep = huge]
M \boxtimes_C N = \text{eq}\bigg(M \otimes N \ar[r,shift left=.75ex,"\Delta_M \otimes \text{Id}_N"]
  \ar[r,shift right=.75ex,swap,"\text{Id}_M \otimes \Delta_N"]
&
M \otimes C \otimes N \bigg).\end{tikzcd}\]
If $V$ is a finite-dimensional right $C$-comodule, then $V^*$ has a natural structure of a left $C$-comodule, and we have the following isomorphism of vector spaces for any right $C$-comodule $M$:
\[ \Hom_C(V,M) \,\cong\, M \boxtimes_C V^*.\]

We now turn our attention to contramodules over a coalgebra $C$. A (right) $C$-contramodule $(B,\theta_B)$ is a $k$-vector space $B$ equipped with a linear map $\theta_B : \Hom_k(C,B) \longrightarrow B$, called the contra-action, satisfying contra-associativity and contra-unity conditions. That is, the following two diagrams commute:

\[\hspace{-0.75ex}
 \begin{tikzcd}[scale cd=0.88]
{\Hom_k\big(C,\Hom_k(C,B)\big)} \arrow[dd, "{\otimes\, \dashv\, \Hom}"] \arrow[rr, "{\Hom_k(C,\theta_B)}"]  & & {\Hom_k(C,B)} \arrow[dd, "\theta_B"] \\
     & &       \\
    {\Hom_k(C \otimes C,B)} \arrow[r, "{\Hom_k(\Delta,B)}"]  & {\Hom_k(C,B)} \arrow[r, "{\theta_B}"] & B
\end{tikzcd}
\hspace{1.5ex}
\begin{tikzcd}[scale cd=0.88]
{\Hom_k(k,B)} \arrow[ddrr, "{\cong}"] \arrow[rr, "{\Hom_k(\epsilon,B)}"]  & & {\Hom_k(C,B)}  \arrow[dd, "{\theta_B}"] \\
     & &       \\
    & & B
\end{tikzcd}\]
where $\lq\lq \otimes \dashv \Hom"$ denotes the tensor-hom adjunction, which for any vector spaces $U,V,W$ is given by identifying $\Hom_k\big(U, \Hom_k(V,W)\big)$ and $\Hom_k\left(U \otimes_k V, W\right)$. We remark that using instead the identification $\Hom_k\big(U, \Hom_k(V,W)\big) \cong \Hom_k(V \otimes_k U, W)$ gives the definition of a left $C$-contramodule. Unless stated otherwise all contramodules will be right contramodules.

Given two right $C$-contramodules $B$ and $D$, let $\Hom^C(B,D)$ denote the space of contramodule homomorphisms from $B$ to $D$, that is, the linear maps $f: B \longrightarrow D$ such that the following diagram commutes: 
\[
 \begin{tikzcd}[scale cd=1]
{\Hom_k(C,B)} \arrow[dd, "{\Hom_k(C,f})"] \arrow[rr, "{\theta_B}"]  & & {B} \arrow[dd, "f"] \\
     & &       \\
    {\Hom_k(C,D)} \arrow[rr, "{\theta_D}"]  && D.
\end{tikzcd}\]
Now consider the vector space $\Hom_k(C,k)$. It can be given the structure of a $C$-contramodule by applying the comultiplication of $C$ in the first factor. That is, the contramodule structure on $\Hom_k(C,k)$ is given by the composition
\[\Hom_k(C,\Hom_k(C,k)) \cong \Hom_k(C \otimes C, k) \xrightarrow{\Hom_k(\Delta,k)} \Hom_k(C,k).\]
More generally, one may replace $k$ with any vector space $V$, obtaining what are called the $\textit{free contramodules}$. One can show that there is an isomorphism of vector spaces
\[ \Hom^C\big(\Hom_k(C,V),W\big) \cong \Hom_k(V,W)\]
 for every (right) $C$-contramodule $W$. In particular, free contramodules are projective. It follows that any contramodule $B$ is projective if and only if it is a direct summand of a free contramodule. It is worth noting that so far, the situation for contramodules is, in some sense, dual to that for comodules.

The above example can be generalised further by replacing $C$ with a left $C$-comodule $M$. Then $\Hom_k(M,V)$ can be given a right $C$-contramodule structure in much the same way, explicitly, the structure map is:
\[ \Hom_k(C,\Hom_k(M,V)) \cong \Hom_k(C \otimes M,V) \xrightarrow{\Hom_k(\Delta_M,V)} \Hom_k(M,V).\]
Note that considering right comodules instead produces left contramodules. To conclude the section, we introduce two constructions which will be heavily used in the later sections. Firstly, given a left $C$-comodule M and a right $C$-contramodule $B$, let $B \odot_C M$ denote the contratensor product of $B$ and $M$. It is a quotient vector space of $B \otimes_k M$ given by the following coequaliser: 
\[ 
\begin{tikzcd}[scale cd=1, sep = huge]
B \odot_C M = \text{coeq}\bigg(\Hom_k(C,B) \otimes_k M \ar[r,shift left=.75ex,"\theta_B \otimes \text{Id}_M"]
  \ar[r,shift right=.75ex,swap,"(\text{eval}) \circ \Delta_M"]
&
B \otimes_k M \bigg),\end{tikzcd}\]
that is, $B \odot_C M = (B \otimes_k M)\big/\big(\text{im}( \theta_B \otimes \text{Id}_M - (\text{eval}) \circ \Delta_M)\big)$ where $(\text{eval}) \circ \Delta_M$ denotes the composition
\[ \Hom_k(C,B) \otimes_k M \xrightarrow{\text{Id}_{\Hom_k(C,B)} \otimes \Delta_M} \Hom_k(C,B) \otimes_k C \otimes_k M  \xrightarrow{\text{eval} \otimes \text{Id}_M} B \otimes_k M.\]
Our final construction involves a right $C$-comodule $M$ and a right $C$-contramodule $B$. Let $\text{Cohom}_C(M,B)$ denote the cohomomorphisms between $M$ and $B$. It is a quotient vector space of $\Hom_k(M,B)$ given by the following coequaliser:
\[ 
\begin{tikzcd}[scale cd=1, sep = huge]
\text{Cohom}_C(M,B) = \text{coeq}\bigg(\Hom_k(M \otimes C,B) \ar[r,shift left=.75ex,"{\Hom_k(\Delta_{M},B)}"]
  \ar[r,shift right=.75ex,swap,"{\Hom_k(M,\theta_B)}"]
&
\Hom_k(M,B)\bigg)\end{tikzcd}\]
where $\Hom_k(M,\theta_B) : \Hom_k(M \otimes C,B)\, \cong\, \Hom_k\big(M, \Hom_k(C,B)\big)  \longrightarrow \Hom_k(M,B).$

Note that if $M$ is a finite-dimensional right $C$-comodule and $B$ is any right $C$-contramodule then these two constructions are heavily related \cite[Section 3.1]{positselski2021contramodules}. In particular, we have
\[ \text{Cohom}_C(M,B) \cong B \odot_C M^*.\]

So far, then, we have seen that these two `module-like structures' over $C$ have some interplay between them. A particular case of this (and a case which will be very useful to us later) is given a finite-dimensional $C$-comodule we may equip it with a $C$-contramodule structure. In fact, this will give us a functor from $C$-Comod$_{\text{f.d.}}$ to $C$-Contra. We make this explicit in the next lemma.

\begin{lemma}\label{f.d.-comod-to-contra}
Let $C$ be a coalgebra over a field $k$ and let $(M,\Delta_M)$ be a finite-dimensional right $C$-comodule. Then $M$ has a right contramodule structure given by the following composition: 
\begin{align*}
\Hom_k(C,M) \cong C^* \otimes M \xrightarrow{\textup{Id}_{C^*} \otimes \Delta_M} C^* \otimes M \otimes C \xrightarrow{eval} M
\end{align*}
where the first isomorphism holds due to the finite-dimensionally of $M$, and where the map `eval' is the obvious evaluation map involving $C^*$ and $C$. Moreover, this defines a functor from $C$-Comod$_{\text{f.d.}}$ to $C$-Contra.
\end{lemma}

\begin{proof}
For ease of notation, denote the claimed contramodule structure on $M$ by $\theta_M$. To prove contra-associativity we utilise the finite-dimensionally of $M$ to fix a basis $\{m_i\}$. We need to show that
$\theta_M \circ \Hom_k(C,\theta_M) \equiv \theta_M \circ \Hom_k(\Delta,M) \circ (\otimes\, \dashv\, \Hom): \Hom_k(C,\Hom_k(C,M)) \longrightarrow M $.

Let $\displaystyle \big(c \mapsto (d \mapsto \sum_i \alpha_{c,d}^i m_i)\big) \in \Hom_k(C,\Hom_k(C,M)).$ Applying the left hand map results in $\displaystyle \sum_i \alpha_{m_{i_{(0)_{(1)}}},m_{i_{(1)}}}^im_{i_{(0)}}$. On the other hand, applying the right map gives us $\displaystyle \sum_i \alpha_{m_{i_{(1)_{(1)}}},m_{i_{(1)_{(2)}}}}^im_{i_{(0)}}$ with equality following from coassociativity of $M$ (as a comodule). 

Showing contra-unity is similar, with equality of the two expressions following from counity of $M$ (as a comodule).

Finally, to check that this construction defines a functor we need to check that given a comodule homomorphism $f: (M,\Delta_M) \longrightarrow (N,\Delta_N)$, the map $f: (M,\theta_M) \longrightarrow (N,\theta_N)$ is a contramodule homomorphism. In other words, given that $f$ satisfies $\Delta_N \circ f \equiv (f \otimes \text{Id}_C) \circ \Delta_M$, we must show that $f$ satisfies $\theta_N \circ \Hom_k(C,f) \equiv f \circ \theta_M$.

Once again we make use of our fixed basis $\{m_i\}$ of $M$ to write an arbitrary element of $\Hom_k(C,M)$ in the form $(c \mapsto \sum_i \alpha_i(c) m_i)$ for some $\alpha_i \in C^*$. Applying $\theta_N \circ \Hom_k(C,f)$ yields $\displaystyle \sum_i \alpha_i(f(m_i)_{(1)}) f(m_i)_{(0)}$. Alternatively, applying $f \circ \theta_M$ gives $\displaystyle \sum_i \alpha_i(m_{i_{(1)}}) f(m_{i_{(0)}})$, but these two expressions are equal precisely because $f$ is a comodule homomorphism.
\end{proof}

To finish the section, we briefly discuss the case where the coalgebra is the coordinate ring, $k[G]$, of an algebraic group $G$ over a field $k$. To avoid any confusion we first get our lefts and rights in order. Modules over the group $G$ will be left modules, and comodules and contramodules over the coordinate ring $k[G]$ will be right comodules and contramodules. 

Recall that a $k[G]$-comodule is equivalent to a $k$-module $M$ along with an action of $G$ on $M_a$, that is, an action of $G(A)$ on $M_a(A) := M \otimes_k A$ via $A$-linear maps for all $k$-algebras $A$ \cite[I.2.8]{jantzen2003representations}. It should be noted that this along with Lemma \ref{f.d.-comod-to-contra} allows us to put a $k[G]$-contramodule structure on finite-dimensional $G$-modules. We have an analogous way to describe contramodules over $k[G]$. Namely, we have the following:

\begin{lemma}
A $k[G]$-contramodule structure on $M$ is equivalent to a natural action of $G$ on $M_{\textup{Hom}} := \Hom_k(-,M)$, that is, an action of $G(A)$ on $\Hom_k(A,M)$ by $A$-linear maps for all $k$-algebras $A$ which is compatible with $k$-algebra homomorphisms $A \rightarrow A'$.
\end{lemma}

\begin{proof}
Let $\theta: \Hom_k(k[G],M) \rightarrow M$ be a contramodule structure on $M$. For all $k$-algebras $A$ define 
\begin{align*}
    \bullet_A : G(A) \times \Hom_k(A,M) &\longrightarrow \Hom_k(A,M) \\
    (g,\, \varphi) &\longmapsto \bigg(a \mapsto \theta\Big(c \mapsto \varphi\big(ag(c)\big)\Big)\bigg).
\end{align*}
We claim that for all $k$-algebras $A$, this defines a $G(A)$ action on $\Hom_k(A,M)$. Indeed, let $\mathbbm{1}: k[G] \rightarrow A \in G(A)$ denote the unit, explicitly we have, $\mathbbm{1}(f) = \epsilon(f)\cdot1_A$ for all $f \in k[G]$, where $\epsilon$ is the counit map. Then we have:
\begin{align*}
    \mathbbm{1} \bullet_A \varphi & \equiv \bigg(a \mapsto \theta\Big(c \mapsto \varphi\big(a\mathbbm{1}(c)\big)\Big)\bigg) \equiv \bigg(a \mapsto \theta\Big(c \mapsto \epsilon(c)\varphi\big(a)\big)\Big)\bigg) \\
    & \equiv \Big(a \mapsto \varphi(a)\Big) \equiv \varphi
\end{align*}
with the penultimate equivalence holding due to the contra-unity of $\theta$. Now let $g,h \in G(A)$, $\varphi \in \Hom_k(A,M)$, then we may compute $g \bullet h \bullet \varphi$ in two ways. On the one hand we have 
\begin{align*}
    g \bullet (h \bullet \varphi) &\equiv g \bullet \bigg(a \mapsto \theta\Big(c \mapsto \varphi\big(ah(c)\big)\Big)\bigg) \\
    &\equiv  \bigg(a \mapsto \theta\Big(c \mapsto \theta\Big(d \mapsto \varphi\big(ag(c)h(d)\big)\Big)\Big)\bigg).
\end{align*}
Now, recall that given $g,h \in G(A) = \Hom_{k-alg}(k[G],A)$ we have $(g \cdot h)(c) = g(c_{(1)})h(c_{(2)})$ where we write $\displaystyle\Delta(c) = c_{(1)} \otimes c_{(2)}$ (the sum is implicit). Therefore, on the other hand, we have
\begin{align*}
(g \cdot h) \bullet \varphi &\equiv \bigg(a \mapsto \theta\Big(c \mapsto \varphi\big(ag(c_{(1)})h(c_{(2)})\big)\Big)\bigg).   
\end{align*}
Applying the contra-associativity of $\theta$ shows that these two expressions are equal. Moreover, the naturality of this action is straightforward to check.

Conversely, for each $k$-algebra $A$, let $\bullet_A: G(A) \times \Hom_k(A,M) \rightarrow \Hom_k(A,M)$ be an action. By the naturality of the action, we have the following commutative diagram for any $k$-algebra $A$ and $g \in G(A)$: 
\[
 \begin{tikzcd}[scale cd=1]
{\Hom_k(k[G],M)} \arrow[rr, "{\text{id}_{k[G]}\, \bullet_{k[G]}} \,-"]  & & {\Hom_k(k[G],M)} \\
     & &       \\
    {\Hom_k(A,M)} \arrow[uu, "{\Hom_k(g,M})"]  \arrow[rr, "{g\, \bullet_A\,-}"]  && {\Hom_k(A,M)} \arrow[uu, "{\Hom_k(g,M)}"] .
\end{tikzcd}\]
Explicitly, for any $\varphi \in \Hom_k(A,M)$ we have 
\begin{equation}\label{naturality}\tag{$*$}
\text{id}_{k[G]} \bullet_{k[G]} (\varphi \circ g) \equiv (g \bullet_A \varphi) \circ g.   
\end{equation} 
We claim that the following composition is a $k[G]$-contramodule structure on $M$:
\begin{align*}
    \theta: \Hom_k(k[G],M) \xrightarrow{\,\,\,\text{id}_{k[G]} \bullet\, -\,\,\,} \Hom_k(k[G],M) \xrightarrow{\,\,\,\text{evaluation at }1 \in k[G]\,\,\,} M.
\end{align*}
We will denote this composition as $(\text{id}_{k[G]} \bullet_{k[G]} -)\big|_{1}$. Rather than check the contramodule conditions directly, we instead show that the operation that produces a $G$-action from a $k[G]$-contramodule structure and vice versa are inverse to each other. Then the contramodule conditions will be satisfied by $\theta$, precisely because $\bullet_A$ is a group action, for each $k$-algebra $A$.

Thus, let's start with a contramodule structure $\theta: \Hom_k(k[G],M) \longrightarrow M$. Producing a group action from this and then converting back to a contramodule structure results in the following:
\begin{align*}
    \theta' : &\Hom_k(k[G],M) \longrightarrow M \\
    &\varphi \longmapsto\,\, \big(\text{id}_{k[G]} \bullet_{k[G]} \varphi\big)\Big|_{1} \equiv \bigg(a \mapsto \theta\Big(c \mapsto \varphi\big(a \cdot \text{id}_{k[G]}(c)\big)\Big)\bigg)\bigg|_{a=1} \\
    &\hspace{5ex}\equiv \theta\Big(c \mapsto \varphi(c)\Big)\, \equiv\, \theta(\varphi).
\end{align*}
Conversely, starting with a group action, $\bullet$, of the algebraic group $G$, producing a contramodule structure and then converting back to a group action results in the following:
\begin{align*}
G(A) \times \Hom_k(A,M) &\longrightarrow \Hom_k(A,M) \\
(g, \varphi) &\longmapsto \bigg(a \mapsto \text{id}_{k[G]} \bullet \Big(c \mapsto \varphi\big(a g(c)\big)\Big)\Big|_{c=1}\bigg)\bigg) \\
& \equiv \bigg(a \mapsto \text{id}_{k[G]} \bullet \Big(c \mapsto \big((\varphi \circ a) \circ g\big)(c)\big)\Big)\Big|_{c=1}\bigg)\bigg).
\end{align*}
Now applying equation (\ref{naturality}) along with the fact that the action of $G(A)$ is $A$-linear (that is, $(g \bullet_A (\varphi \circ a))(b) = (g \bullet_A \varphi)(ab)$ for all $g \in G(A), \varphi \in \Hom_k(A,M)$, and $a,b, \in A$, where by a slight abuse of notation the linear endomorphism of $A$ given by left multiplication by $a \in A$ is also denoted by $a$), as well as the fact that $g(1) = 1$ gives us
\[(g, \varphi) \longmapsto \bigg(a \mapsto \Big(g \bullet_A (\varphi \circ a)\Big) \circ g \big|_{1}\bigg) \equiv \bigg(a \mapsto \Big(g \bullet_A (\varphi 
 \circ a)\Big)\big(1\big)\bigg) \equiv \Big(a \mapsto \big(g \bullet_A \varphi\big)\big(a\big)\Big) \equiv g \bullet_A \varphi.\]
\end{proof}

\section{Restriction and induction of contramodules.}\label{Section-res-and-ind}

In this section, we define the induction and restriction functors for contramodules and show that they form an adjoint pair. Let $C$ and $D$ be coalgebras, and let $ \rho:C \rightarrow D$ be a coalgebra map. Let $(V,\theta)$ be a $C$-contramodule, then $V$ can be viewed as a $D$-contramodule via the following composition:
\[\begin{tikzcd}
    \Hom_k(D,V) \arrow[rr, "{\Hom_k(\rho , V)}"]&& \Hom_k(C,V) \arrow[r,"\theta"]& V. \end{tikzcd}\]
This is the restriction functor from $C$-contramodules to $D$-contramodules. We denote this by $\Res_D^C(-)$ or $(-)|_D$ (with the omission of the map $\rho$) and will often make use of a slight abuse of notation and write $\theta$ instead of $\theta \circ \Hom_k(\rho , V)$ for the composition. 

Now, let $(W,\pi)$ be a $D$-contramodule, we may construct a $C$-contramodule as follows. To begin, observe that $C$ naturally has the structure of a $D$-comodule, with the coaction given by the composition of the comultiplication of $C$ followed by the map$\begin{tikzcd}[sep =small] \textup{Id}_{C} \otimes \rho : C \otimes C \arrow[r]& C \otimes D.\end{tikzcd}$ 

Now consider the following two maps from $\Hom_k(C \otimes D, W)$ to $\Hom_k(C, W)$:
\begin{equation}\label{cohom_defining_maps}
\begin{aligned}
f:& \Hom_k(C \otimes D, W) \xrightarrow{\Hom_k\big((\textup{Id}_{C} \otimes \rho)\circ \Delta,W\big)} \Hom_k(C, W),\\
g:& \Hom_k(C \otimes D, W) \xrightarrow{\,\,\,\cong\,\,\,} \Hom_k\big(C,\Hom_k(D,W)\big) \xrightarrow{\Hom_k(C,\pi)} \Hom_k(C, W),
\end{aligned}    
\end{equation}
where in the first map we have used the right $D$-comodule structure on $C$, and in the second we have used the $D$-contramodule structure on $W$. As seen in Section \ref{Section-contra-defns}, we have the $k$-vector space \[\text{Cohom}_{D}(C,W) := \dfrac{\Hom_k(C,W)}{\text{Im}(f - g)}.\]
Now, by viewing $C$ as a left comodule over itself (via the comultiplication of $C$) we may give $\text{Cohom}_{D}(C,W)$ the structure of a right $C$-contramodule. We do this by observing that it is nothing more than a quotient of the free (right) $C$-contramodule $\Hom_k(C, W)$. This construction gives us an induction functor from $D$-contramodules to $C$-contramodules. We write the induction functor as 
\[\textup{Ind}_{D}^C : D\text{-Contra} \longrightarrow C\text{-Contra},\]
(once again omitting the map $\rho$ from our notation). Observe that if $F: C$-Contra $\longrightarrow $ Vect denotes the forgetful functor, then we have $\text{Cohom}_D(C,-) = F \circ \text{Ind}_D^C(-)$. 

Just as in the case of comodules, induction and restriction of contramodules are adjoint to each other \footnote{However, for comodules the (co)induction functor is right adjoint. Whereas for contramodules the induction functor is left adjoint.}. Explicitly we have:
\begin{proposition}
Let $\rho: C \rightarrow D$ be a map of coalgebras. Then the induction functor (defined using $\rho$) is left adjoint to the restriction functor (defined using $\rho$). That is, for all $D$-contramodules $W$ and $C$-contramodules $V$ we have
\[\Hom^{C}\Big(\textup{Ind}_{D}^{C}(W), V\Big)\, \cong\, \Hom^{D}\big(W, V|_{D}\big).\]    
\end{proposition}
 
\begin{proof}
To begin, recall that we have the following natural isomorphism of vector spaces:
\begin{align*}
    \Gamma: \textup{Hom}^{C}\big(\textup{Hom}_k(C,W),V\big) &\longrightarrow \textup{Hom}_k(W,V) \\
    \varphi &\longmapsto \Big(w \mapsto \varphi\big(c \mapsto \epsilon(c)w\big)\Big) \\
    \theta_V \circ \textup{Hom}_k(C,\psi\big) &\longleftarrow{\hspace{-6.8pt}\shortmid}\, \psi.
\end{align*}
Now, letting $f,g$ denote the previously described maps from $\Hom_k(C \otimes D, W) \longrightarrow \Hom_k(C, W)$ in equation (\ref{cohom_defining_maps}), we have the identification 
\begin{equation}\label{ident}
\Hom^{C}\big(\textup{Ind}_{D}^{C}(W), V\big) =\Big\{ \varphi \in \textup{Hom}^{C}\big(\Hom_k(C,W),V\big) :\,\, \varphi \circ (f - g) \equiv 0\Big\}.    
\end{equation}
Next, it will be convenient to have the definition of what it means to be a contramodule homomorphism at hand. Recall that if ${\varphi \in \textup{Hom}^{C}\big(\Hom_k(C,W),V\big)}$ then we have that the following diagram commutes:
\begin{equation}\tag{$\dagger$}\label{commute1}
    \begin{tikzcd}
{\Hom_k\Big(C,\Hom_k\big(C,W\big)\Big)} \arrow[dd, "{\Hom_k\big(C,\varphi\big)}"] \arrow[rrr, "{\theta_{\Hom_k(C, W)}}"] &  &  & {\Hom_k\big(C,W\big)} \arrow[dd, "\varphi"] \\
     & & &      \\
    {\Hom_k\big(C,V\big)} \arrow[rrr, "\theta_{V}"] &  & & V.
    \end{tikzcd}
\end{equation}
Also, if $\psi \in \textup{Hom}^{D}\big(W, V|_{D}\big)$ then we have that the following diagram commutes:
\begin{equation}\tag{$\dagger\dagger$}\label{commute2}
    \begin{tikzcd}
{\Hom_k\big(D,W\big)} \arrow[dd, "{\Hom_k\big(D,\psi\big)}"] \arrow[rrr, "{\theta_{W}}"] &  &  & {W} \arrow[dd, "\psi"] \\
     & & &      \\
    {\Hom_k\big(D,V\big)} \arrow[rrr, "\theta_{V}"] &  & & V.
    \end{tikzcd} 
\end{equation}
We want to show that if $\phi \in \Hom^{C}\left(\textup{Ind}_{D}^{C}(W), V\right)$, then $\Gamma(\phi) \in \textup{Hom}^{D}\big(W, V|_{D}\big)$. This amounts to showing that diagram \eqref{commute2} commutes when setting $\psi = \Gamma(\phi).$ Let $\alpha \in \Hom_k(D,W).$ Travelling horizontally and then vertically along (\ref{commute2}) gives 
\[ \Big(\Gamma(\phi) \circ \theta_W\Big) (\alpha) = \phi\Big(c \longmapsto \epsilon(c)\theta_W(\alpha) \Big). \]
On the other hand, travelling vertically and then horizontally gives 
\[ \theta_V\big(\Gamma(\phi)\circ \alpha\big) = \theta_V\Big(d \longmapsto \phi\big(c \longmapsto \epsilon(c)\alpha(d)\big)\Big).\]
We must show that these elements of $V$ are equal. Observe that since $\phi \circ g = \phi \circ f$ (by the identification (\ref{ident})) we have as maps from $\Hom_k(D \otimes C,W) \cong \Hom_k\big(C, \Hom_k(D,W)\big)$ to $V$, 
\begin{multline*}
\phi \circ \Hom_k\big(C,\theta_W)\, \stackrel{\text{def}}{=}\, \phi \circ g\, =\, \phi \circ f\, \\\stackrel{\text{def}}{=} \,\left(\phi \circ \theta_{\Hom_k(C,W)}\right)|\hspace{-0.45ex}_{|\Hom_k\big(C,\Hom_k(D,W)\big)} \,\stackrel{\eqref{commute1}}{=}\, \theta_V \circ \Hom_k\big(C,\phi\big)|\hspace{-0.45ex}_{|\Hom_k\big(C,\Hom_k(D,W)\big)}.    
\end{multline*}
Applying this to $\theta_V\big(\Gamma(\phi)\circ \alpha\big)$ (specifically, at the equality marked by $*$ in what follows) gives:
\begin{align*}
    \theta_V\big(\Gamma(\phi)\circ \alpha\big) &= \theta_V\Big(d \mapsto \phi\big(c \mapsto \epsilon(c)\alpha(d)\big)\Big)\stackrel{*}{=} \phi\Big(c \longmapsto \theta_W\big(\gamma \mapsto \epsilon(c)\alpha(\gamma)\big)\Big) \\
    &= \phi\Big(c \longmapsto \epsilon(c)\theta_W\big(\gamma \mapsto \alpha(\gamma)\big)\Big)
    = \Big(\Gamma(\phi) \circ \theta_W\Big) (\alpha)
\end{align*}
as required. Injectivity is obvious. Surjectivity amounts to checking that if $\psi \in \Hom^{D}\big(W,V|_{D}\big)$ then $\Gamma^{-1}(\psi) \equiv \theta_V \circ \Hom_k(C,\psi)$ satisfies $\Gamma^{-1}(\psi) \circ f \equiv \Gamma^{-1}(\psi) \circ g$, i.e., that the following diagram commutes:

\begin{equation*}
    \begin{tikzcd}
{\Hom_k\Big(C,\Hom_k\big(D,W\big)\Big)} \arrow[rrr, "{f}"] \arrow[dd, "{g}"] &  & &{\Hom_k\big(C,W\big)} \arrow[rr, "{\Hom_k(C,\psi)}"] && {\Hom_k(C,V)} \arrow[dd,"{\theta_V}"] \\
& & & & &   \\
{\Hom_k\big(C,W\big)} \arrow[rrr, "{\Hom_k(C,\psi)}"] & & & {\Hom_k(C,V)} \arrow[rr, "\theta_V"]& & {V}.
    \end{tikzcd}
\end{equation*}
This follows immediately once we observe that 
\[\Hom_k(C,\psi) \circ g \equiv  \Hom_k\big(C,\theta_V|_{D}\big) \circ \Hom_k\big(C,\Hom_k(D,\psi)\big)\] (this is just the functor $\Hom_k(C,-)$ applied to the definition of $\psi$ being a $D$-contramodule homomorphism), and noticing that the resulting diagram

\begin{equation*}
    \begin{tikzcd}
{\Hom_k\Big(C,\Hom_k\big(D,W\big)\Big)} \arrow[rrrrr, "{\theta_{\Hom_k(C,W)|_{\Hom_k\left(C,\Hom_k(D,W)\right)}}}"] \arrow[dd, "{\Hom_k\big(C, \Hom_k(D,\psi)\big)}"] &  & & & &{\Hom_k\big(C,W\big)} \arrow[rr, "{\Hom_k(C,\psi)}"] && {\Hom_k(C,V)} \arrow[dd,"{\theta_V}"] \\
& & & & & & &  \\
{\Hom_k\big(C,\Hom_k(D,V)\big)} \arrow[rrrrr, "{\Hom_k(C,\theta_V)}"] & & & & & {\Hom_k(C,V)} \arrow[rr, "\theta_V"]& & {V}
    \end{tikzcd} \end{equation*}
commutes since it is nothing more than the condition that $\Gamma^{-1}(\psi) \in \Hom^{C}\big(\Hom_k(C,W),V\big)$ after restricting ${\Hom_k\big(C,\Hom_k(C,W)\big)}$ to ${\Hom_k\big(C,\Hom_k(D,W)\big)}$.
\end{proof}

\section{Affine quotients and contra-exactness.}\label{Section-affine}

Let $G$ be an algebraic group over an algebraically closed field $k$. Let $H$ be a closed subgroup of $G$. The subgroup $H$ is exact in $G$ if the induction functor of modules is exact. We may also define the notion of contra-exactness, namely, we say that $H$ is \textit{contra-exact} in $G$ if the induction functor from $k[H]$-Contra to $k[G]$-Contra (defined using the map $\iota^*: k[G] \rightarrow k[H]$ induced from the embedding $\iota: H \rightarrow G)$ is exact. That is, if the functor takes short exact sequences of $k[H]$-contramodules to short exact sequences of $k[G]$-contramodules. 

In this section, we collect results from other sources and show an analogous result for contramodules to that of Cline, Parshall and Scott \cite{cline1977induced}. To begin, the following result relates the affineness of the quotient $G/H$ to the injectivity of $k[G]$ as a $k[H]$-comodule.

\begin{proposition} \cite{cline1977induced}\label{cline_affine_iff_injective}
Let $H$ be a closed subgroup of an algebraic group $G$ with coordinate rings $k[H]$ and $k[G]$, respectively. Then the following are equivalent:
\begin{enumerate}[label=\roman*)]
    \item the quotient $G/H$ is an affine variety
    \item $k[G]$ is an injective $k[H]$-comodule
    \item $H$ is exact in $G$.
\end{enumerate}
\end{proposition}

In the next result, we utilise opposite categories to ensure that all functors are covariant. Recall that given a category $C$ the opposite category $C^{\text{op}}$ is the category with the same objects as $C$ and morphisms given by $\Hom_{C^{\text{op}}}(X,Y) := \Hom_{C}(Y,X)$. The following result gives us a relation between the injectiveness of a comodule and the exactness of the Cohom functor. It also relates the projectiveness of a contramodule to the exactness of the Cohom functor, which we state now to be used in the next section. 

\begin{lemma}\cite[Section 3.1]{positselski2021contramodules}\label{Posit_equivalences}
Let $C$ be a coassociative coalgebra over a field $k$, then 
\begin{enumerate}[label=\alph*)]
    \item if $M$ is a $C$-comodule, the following are equivalent:
    \begin{enumerate}[label=\roman*)]
        \item $M$ is an injective $C$-comodule
        \item The functor $\text{Cohom}_C(M,-):\,C$-Contra $\longrightarrow Vect$ is exact
        \item The functor $M \boxtimes_C -:\,$$C$-Comod$_\text{left}  \longrightarrow Vect$ is exact.
    \end{enumerate}
    \item if $B$ is a $C$-contramodule, the following are equivalent:
    \begin{enumerate}[label=\roman*)]
        \item $B$ is a projective $C$-contramodule
        \item The functor $\text{Cohom}_C(-,B):\,C$-$\text{Comod}^{\text{op}}$ $\longrightarrow Vect$ is exact
        \item The functor $B \odot_C -:\,C$-Comod$_\text{left} \longrightarrow Vect$ is exact.
    \end{enumerate}
\end{enumerate}

\end{lemma}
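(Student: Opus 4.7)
The lemma gives two sets of equivalences (parts (a) and (b)), each connecting an injectivity/projectivity condition on a (co)module to exactness of two auxiliary functors. My plan is to prove part (a) in detail; part (b) follows from the formally dual argument.

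For part (a), I would first establish two direct computational identities from the defining (co)equalizers: (C1) $N \boxtimes_C (C \otimes V) \cong N \otimes V$ and (C2) $\textup{Cohom}_C(C \otimes V, B) \cong \Hom_k(V, B)$, for any right $C$-comodule $N$, left $C$-contramodule $B$, and vector space $V$. Both reduce to the base cases $N \boxtimes_C C \cong N$ and $\textup{Cohom}_C(C, B) \cong B$, which follow respectively from the counity axiom and the contra-unity axiom. The implications (i) $\Rightarrow$ (ii) and (i) $\Rightarrow$ (iii) are then immediate: an injective $M$ is a direct summand of a cofree comodule $C \otimes V$, so the corresponding functors are direct summands of $\Hom_k(V,-)$ and $- \otimes V$, both exact on vector spaces.

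For (iii) $\Rightarrow$ (i), I would use the identity $\Hom_C(V, M) \cong V^* \boxtimes_C M$ recalled in Section~\ref{Section2} for finite-dimensional left comodules $V$: dualizing a short exact sequence $0 \to U \to V \to W \to 0$ of finite-dimensional left comodules to one of finite-dimensional right comodules and applying (iii) shows that $\Hom_C(-, M)$ is right exact on the subcategory of finite-dimensional comodules. Combined with its automatic left exactness and the fundamental theorem of coalgebras (every comodule is a directed union of finite-dimensional subcomodules), this yields injectivity of $M$. For (ii) $\Rightarrow$ (iii), I would establish the duality isomorphism $\textup{Cohom}_C(M, N^\vee) \cong (N \boxtimes_C M)^\vee$ for right $C$-comodules $N$, where $N^\vee = \Hom_k(N, k)$ carries its natural left $C$-contramodule structure; this is a routine calculation converting the coequalizer defining Cohom into the linear dual of the equalizer defining the cotensor product, via repeated application of the tensor-hom adjunction. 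Exactness of $\textup{Cohom}_C(M, -)$ then yields exactness of $(- \boxtimes_C M)^\vee$, and since a complex of vector spaces is exact if and only if its linear dual is exact, we recover (iii).

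Part (b) proceeds by the formally dual argument, replacing cofree comodules $C \otimes V$ by free contramodules $\Hom_k(C, V)$ and using the analogous computational identities $N \odot_C \Hom_k(C, V) \cong N \otimes V$ and $\textup{Cohom}_C(M, \Hom_k(C, V)) \cong \Hom_k(M, V)$. The main obstacle throughout is the converse direction in each equivalence: exactness of the auxiliary functor is a condition on the variable opposite to the one measuring injectivity/projectivity, so rather than directly extracting a lifting property, the argument must go through a duality identification together with the fundamental theorem of coalgebras to bridge finite-dimensional and general (co)modules.
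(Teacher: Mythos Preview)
The paper does not supply its own proof of this lemma; it is simply quoted from \cite[Section 3.1]{positselski2021contramodules}. So there is nothing in the paper to compare against, and the question is only whether your proposal stands on its own.

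Your treatment of part (a) is correct and standard: the forward implications follow from the identities (C1), (C2) applied to direct summands of cofree comodules; for (iii) $\Rightarrow$ (i) the identity $\Hom_C(V,M)\cong V^*\boxtimes_C M$ reduces injectivity to the finite-dimensional case, and local finiteness of comodules (plus a Zorn argument) finishes it; and the duality $\textup{Cohom}_C(M,N^\vee)\cong(N\boxtimes_C M)^\vee$ gives (ii) $\Rightarrow$ (iii).

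The genuine gap is in part (b). You assert it follows by the ``formally dual argument'', but the converse implications do not dualize. In (a), the step (iii) $\Rightarrow$ (i) worked because the injectivity test $\Hom_C(-,M)$ could be reduced to finite-dimensional comodules and then extended by the fundamental theorem of coalgebras. The would-be dual for (b) reduces the projectivity test $\Hom^C(B,-)$ to finite-dimensional \emph{contramodules} via $\Hom^C(B,P)\cong(P^*\odot_C B)^*$ for $P$ finite-dimensional, but there is no local finiteness for contramodules: a general contramodule is not a filtered limit of finite-dimensional quotients in any way that lets you bootstrap exactness of $\Hom^C(B,-)$. Likewise, (iii) $\Rightarrow$ (ii) fails to dualize because $\textup{Cohom}_C(-,B)$ does not commute with filtered colimits in the comodule variable. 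The functors in (b) do vary over comodules, so local finiteness is still in play, but bridging their exactness to projectivity of $B$ needs an extra ingredient---for example the adjunction $\Hom^C(B,\Hom_k(N,V))\cong\Hom_k(N\odot_C B,V)$ combined with a splitting argument for $\theta_B\colon\Hom_k(C,B)\twoheadrightarrow B$, or the co-contra correspondence. Your sketch does not supply this, and ``formally dual'' papers over precisely the point where the asymmetry between comodules and contramodules matters.
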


Putting these two results together, we obtain our final result for this section.

\begin{theorem}
Let $H$ be a closed subgroup of an algebraic group $G$ with coordinate rings $k[H]$ and $k[G]$, respectively. Then the following are equivalent:
\begin{enumerate}[label=\roman*)]
    \item $G/H$ is an affine variety
    \item $H$ is contra-exact in $G$
    \item $H$ is exact in $G$.
\end{enumerate}
\end{theorem}

\begin{proof}
Proposition \ref{cline_affine_iff_injective} gives $i) \iff iii)$. It suffices to show that $i) \iff ii)$. By combining Proposition \ref{cline_affine_iff_injective} and Lemma \ref{Posit_equivalences} we have that the following are equivalent:
\begin{itemize}
    \item[--] $G/H$ is an affine variety
    \item[--] $k[G]$ is an injective $k[H]$-comodule
    \item[--] the functor $\textup{Cohom}_{k[H]}(k[G],-):\,k[H]$-Contra $\longrightarrow \text{Vect}$ is exact.
\end{itemize}
Now, we have $\textup{Cohom}_{k[H]}(k[G],-) = F \circ \textup{Ind}_{k[H]}^{k[G]}(-)$, where $F: k[G]$-Contra $\longrightarrow $ Vect is the forgetful functor. Since the forgetful functor is faithful it reflects monomorphisms. Thus, $\textup{Cohom}_{k[H]}(k[G],-)$ being exact is equivalent to $\textup{Ind}_{k[H]}^{k[G]}(-)$ being exact. This concludes the proof.
\end{proof}

\section{An inverse limit theorem for projective contramodules of semi\-simple algebraic groups.}\label{Section-inverse-limit}

In this section, we prove an analogous result to the direct limit theorem \cite{donkin1980}, \cite[Proposition 2]{Ballard1978}. Vaguely speaking, the direct limit theorem says that given an algebraic group $G$ one can produce injective covers of the simple $G$-modules by taking a direct limit of certain twisted tensor products of $G$-structures of injective covers of simple $G_1$-modules. We give two proofs of our contramodule analog. The first proof is direct, it follows a strategy similar to Donkin's proof of the direct limit theorem whilst making use of contramodule constructions. The second proof is based on a conjecture given to me in a correspondence with Donkin. This proof is more conceptual. We include both proofs as it is believed they each hold value.

Let $G$ be a simply connected, semisimple algebraic group over an algebraically closed field $k$ of characteristic $p$. Let $k[G]$ denote the coordinate ring of $G$, and let $F: G \longrightarrow G^{(1)}$ be the Frobenius map given by $F^*: k[G]^{(1)} \longrightarrow k[G]$, $F^*(f) = f^p$ for $f \in k[G]$. Although confusion is unlikely, we explicitly note that the use of $F$ for the forgetful functor (as used in the previous section) has been dropped and in this section, $F$ will always denote the Frobenius map.  For each $r > 0$, let $G_r$ denote the $r^{\text{th}}$ Frobenius kernel of $G$, and let $k[G_r]$ be the coordinate ring of $G_r$. We identify $u_r$, the $r^{th}$ hyperalgebra of $G$, with $k[G_r]^*$. 

Let $T \subset G$ be a maximal torus, and let $X(T)$ $\big($resp. $X_+(T)\big)$ denote the set of weights (resp. dominant weights). For each dominant weight $\lambda \in X_+(T)$, let $L(\lambda)$ denote the simple $G$ module of highest weight $\lambda$. For each $p^r$-restricted weight $\lambda$, let $L_r(\lambda)$ denote the simple $G_r$-module with highest weight $\lambda$. Note that as $G_r$-modules we have $L(\lambda)|_{G_r} \cong L_r(\lambda)$. Such modules describe all simple $G_r$ modules up to isomorphism \cite[Chapter II.3]{jantzen2003representations}. Let $V$ be a $G_1$-module, then we say that a $G$-module $W$ is a $G$-structure on $V$ if $W|_{G_1} \cong V$ as $G_1$-modules.

\begin{Assumption*}
We assume that $G$-structures $P(\lambda)$ exist on $P_1(\lambda)$, the $G_1$-projective covers of $L_1(\lambda)$, for each $p$-restricted weight $\lambda$. 

\end{Assumption*}

\begin{remark}\label{HumVermaconjecture}
It is a long-standing conjecture of Humphreys and Verma that such $G$-structures exist, and they have been shown to exist for $p \geq 2h - 4$, where $h$ is the Coxeter number of $G$ \cite[Chapter II.11]{jantzen2003representations} \cite{bendel2023donkins}.
\end{remark}

Let $\lambda$ be a dominant weight with $p$-adic expansion 
\[ \lambda = \sum_{i=0}^sp^i\lambda_i\]
with each $\lambda_i$ a $p$-restricted weight. Suppose that for each $i$, $P(\lambda_i)$ is a $G$-structure on $P_1(\lambda_i)$, the $G_1$-projective cover of $L_1(\lambda_i)$. For any $r > s$ define
\[P_{\lambda,r} = P(\lambda_0) \otimes P(\lambda_1)^{(1)} \otimes \dots \otimes P(\lambda_s)^{(s)} \otimes P(0)^{(s+1)} \otimes \dots \otimes P(0)^{(r-1)}\]
where the superscript $(i)$ denotes a twist by $i$ applications of the Frobenius morphism.

As $G_r$-modules, we have $P_{\lambda,r} \cong P_r(\lambda)$, and so in particular $P_{\lambda,r} \big/\text{ Rad}_{G_r}(P_{\lambda,r}) \cong L_r(\lambda)$. This follows from the analogous discussion for the injective construction $Q_r(\lambda)$ (denoted $Q(r,\lambda)$ in Donkin \cite{donkin1980}), along with the fact that the Frobenius kernels are unimodular (i.e. $P_r(\lambda) = Q_r(\lambda)$ for each $r > 0$ and $\lambda$ a $p^r$-restricted weight \cite[Chapter 6, Theorem 6]{alperin_1986}).

Now, let $\lambda$ be a fixed dominant weight, then for any finite-dimensional $k[G]$-comodule $V$, we let $f(V)$ denote the composition multiplicity of $L(\lambda)$ in $V$. We have the following lemma: 

\begin{lemma}\label{DonkinLemmaAnalog}
Let $V$ be a finite-dimensional $k[G]$-comodule, then for all large $r$ (depending on $V$) we have 
\[ \Hom_{k[G]}\big(P_{\lambda,r},V\big) = \Hom_{k[G_r]}\big(P_{\lambda,r},V\big)\]
and has dimension $f(V)$.
\end{lemma}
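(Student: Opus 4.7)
My plan is in two parts: first, to compute $\dim \Hom_{k[G_m]}(P_{\lambda,m}, V)$ via the $G_m$-projective cover structure of $P_{\lambda,m}$ together with Steinberg's tensor product theorem; and second, to establish the equality of the two Hom spaces either by induction paralleling Donkin's 1980 argument on the injective/comodule side, or more efficiently by dualising to Donkin's original result.

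The paragraph preceding the lemma records that $P_{\lambda,m}|_{G_m} \cong P(m,\lambda)$, the $G_m$-projective cover of $L(m,\lambda)$, so
\[\dim \Hom_{k[G_m]}(P_{\lambda,m}, V) = [V|_{G_m} : L(m,\lambda)].\]
Since $V$ is finite dimensional, only finitely many dominant weights $\mu$ appear among its $G$-composition factors; choose $m$ large enough that all such $\mu$ and $\lambda$ itself are $p^m$-restricted. By Steinberg's tensor product theorem, $L(\mu)|_{G_m} \cong L(m,\mu)$, and distinct $\mu$ yield pairwise non-isomorphic $L(m,\mu)$, so $[V|_{G_m} : L(m,\lambda)] = [V : L(\lambda)] = f(V)$.

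The inclusion $\Hom_{k[G]}(P_{\lambda,m}, V) \subseteq \Hom_{k[G_m]}(P_{\lambda,m}, V)$ is automatic, so only the reverse inclusion requires work. The cleanest route is via linear duality. Under the standing assumption that $G$-structures $P(\nu)$ exist on the $G_1$-projective covers, $G_1$-unimodularity gives $P(1,\nu)^* \cong Q(1,-w_0\nu)$, so $P(\nu)^*$ is a $G$-structure on the injective envelope $Q(1,-w_0\nu)$; together with the identification $(M^{Fr})^* \cong (M^*)^{Fr}$ this yields an isomorphism of $G$-modules
\[P_{\lambda,m}^* \cong Q(-w_0\lambda_0) \otimes Q(-w_0\lambda_1)^{Fr} \otimes \cdots \otimes Q(-w_0\lambda_s)^{Fr^s} \otimes Q(0)^{Fr^{s+1}} \otimes \cdots \otimes Q(0)^{Fr^{m-1}} = Q_{-w_0\lambda, m},\]
the building block of Donkin's direct-limit construction. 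Since $V$ and $P_{\lambda,m}$ are finite dimensional, linear duality gives natural isomorphisms
\[\Hom_{k[G]}(P_{\lambda,m}, V) \cong \Hom_{k[G]}(V^*, Q_{-w_0\lambda, m}) \quad\text{and}\quad \Hom_{k[G_m]}(P_{\lambda,m}, V) \cong \Hom_{k[G_m]}(V^*, Q_{-w_0\lambda, m}),\]
and Donkin's lemma \cite{donkin1980} identifies these two spaces for $m$ large, with common dimension $[V^* : L(-w_0\lambda)] = f(V)$.

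The main obstacle in this plan is verifying that the linear dual $P(\nu)^*$ really is a $G$-structure on $Q(1,-w_0\nu)$, which combines the $G_1$-unimodularity with compatibility of duality and the ambient $G$-action. A more self-contained alternative mirrors Donkin's induction on the composition length of $V$: the base case $V = L(\mu)$ is handled by the natural $G$-projection $P_{\lambda,m} \to L(\lambda) = L(\lambda_0) \otimes L(\lambda_1)^{Fr} \otimes \cdots \otimes L(\lambda_s)^{Fr^s}$ obtained by applying the projections $P(\lambda_i) \to L(\lambda_i)$ and $q : P(0) \to L(0) = k$ in each tensor factor (which is non-zero when $\mu = \lambda$ and cannot exist when $\mu \neq \lambda$, since the ambient $G_m$-Hom already vanishes). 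The inductive step is the delicate point: given a short exact sequence $0 \to V' \to V \to L(\mu) \to 0$, one needs every $G$-map $P_{\lambda,m} \to L(\mu)$ to lift to a $G$-map $P_{\lambda,m} \to V$; as $P_{\lambda,m}$ is not $G$-projective, this demands a non-trivial vanishing of the connecting $\Ext^1_{k[G]}$ map, which is precisely what the duality route sidesteps.
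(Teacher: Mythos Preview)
Your proposal is correct. The paper does not actually spell out a proof: it simply asserts that Donkin's induction on the composition length of $V$ carries over ``almost verbatim'' to the projective setting, which is exactly your secondary approach. Your primary route via linear duality---identifying $P_{\lambda,m}^*$ with Donkin's $Q_{-w_0\lambda,m}$ and then quoting his lemma as a black box---is a genuinely different packaging: rather than re-running the induction you reduce directly to the already-proved injective statement. This is clean and valid; the only point of care is that Donkin's lemma must apply to \emph{any} choice of $G$-structure on the $Q(1,\nu)$, since your $P(\nu)^*$ need not coincide with a pre-chosen one, but his argument is insensitive to that choice.

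Your caution about the inductive step is well placed but slightly overstated. In Donkin's argument (and its projective transcription) the lifting you flag is not obtained by showing $P_{\lambda,m}$ behaves projectively; instead one observes that $\Hom_{G_m}(P_{\lambda,m},V)$ carries a natural $G/G_m$-module structure (as $G_m$ is normal), that this action is trivial when $V$ is simple by the base case, and that it remains trivial under extensions because $H^1(G,k)=\Hom(G,\mathbb{G}_a)=0$ for semisimple $G$. So the ``non-trivial vanishing'' is this standard fact about reductive groups, and your duality route does not so much sidestep it as bury it inside the citation to Donkin---a perfectly reasonable economy.
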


\begin{proof}
We refer to the analogous result and proof by Donkin \cite{donkin1980}. The main points are that the proof uses induction on the composition length of $V$, and the largeness of $r$ depends on the weights of $V$. \cite[Section 2.3]{HUMPHREYS1977203}. The proof of our result follows almost verbatim, so we will not reproduce it.
\end{proof}

With Lemma \ref{Posit_equivalences}b) in mind, our plan of attack in showing that our (not yet stated) contramodule of interest is projective will be to show that the associated Cohom functor is exact. This idea leads us to our next result. Firstly however, we recall a couple of constructions and results from Section \ref{Section-contra-defns}, as well as describe a correspondence between finite-dimensional $C$-comodules and $C$-contramodules.

Let $V$ and $W$ be finite-dimensional $C$-comodules\footnote{Of course, we only need at least one of $V$ and $W$ to be finite-dimensional for this to hold, however, we make use of this description only in cases where $V$ and $W$ are both finite-dimensional, so we may as well assume it from the outset.}, then we have:

\begin{align*}
    \Hom_C(W,V) \cong & \,\,V \boxtimes_C W^* \\
    =& \begin{tikzcd}[scale cd=1, sep = huge, ampersand replacement=\&]\text{eq}\,\bigg(V \otimes_k W^* \ar[r,shift left=.75ex,"\text{Id}_V \otimes \Delta_{W^*}"]
  \ar[r,shift right=.75ex,swap,"\Delta_V \otimes \text{Id}_{W^*}"] \& V \otimes_k C \otimes_k W^* \bigg).
 \end{tikzcd}
\end{align*}
Furthermore, as $W$ is a finite-dimensional $C$-comodule, we may view $W$ as a (right) $C$-contramodule via the following composition:
\[ \theta_W : \Hom_k(C,W) \cong C^* \otimes W \xrightarrow{\text{Id}_{C^*} \otimes \Delta_W} C^* \otimes W \otimes C \xrightarrow{\text{eval}} W\]
where $\text{eval}:C^* \otimes W \otimes C \longrightarrow W$ is the obvious evaluation map involving $C^*$ and $C$. A key observation we will make use of is that this $C$-contramodule structure is nothing more than the natural (left) $C^*$-module structure on a $C$-comodule. Thus, if $m_W$ denotes the $C^*$-action on $W$ as a module, then $\theta_W \equiv m_W$ (once we identify $\Hom_k(C,W)$ with $C^* \otimes W$).

By viewing $W$ as a contramodule in this way, we may consider the cohomomorphisms from the comodule $V$ to the contramodule $W$. We have:

\begin{align*}
    \text{Cohom}_C(V,W) \cong & \,\,W \odot_C V^* \\
    =& \begin{tikzcd}[scale cd=1, sep = huge, ampersand replacement=\&]\text{coeq}\,\bigg(\Hom_k(C,W) \otimes_k V^*  \ar[rrr,shift left=0.9ex,"\big(\text{eval} \otimes \text{Id}_{V^*}\big)\circ\big(\text{Id}_{\Hom_k(C,W)} \otimes \Delta_{V^*} \big)"]
  \ar[rrr,shift right=.9ex,swap,"\theta_W \otimes\, \text{Id}_{V^*}"] \&\&\& W \otimes_k V^*\bigg)
 \end{tikzcd}\\
 =&  \begin{tikzcd}[scale cd=1, sep = huge, ampersand replacement=\&]\text{coeq}\,\bigg(V^* \otimes_k C^* \otimes_k W \ar[r,shift left=0.9ex,"m_{V^*} \otimes \text{Id}_W "]
  \ar[r,shift right=.9ex,swap,"\text{Id}_{V^*} \otimes m_{W}"] \& V^* \otimes_k W\bigg).
 \end{tikzcd}\\
\end{align*}
Having recalled these constructions, we give the following result. 

\begin{lemma}[$\Hom - \text{Cohom} $ duality.]\label{duality_lemma} Let $C$ be a coalgebra over a field $k$, and let $(V,\Delta_V)$ and $(W,\Delta_W)$ be finite-dimensional  $C$-comodules, then 
\[\textup{Cohom}_C(V,W) = \Hom_C(W,V)^*,  \]
where on the left-hand side $W$ is viewed as a $C$-contramodule in the way described above. 

In particular, by considering the cases when $C=k[G]$, $W = P_{\lambda,r}$, and also $C=k[G_r]$, $W = P_{\lambda,r}$ we obtain that for all finite-dimensional $k[G]$-comodules $V$
\[\textup{Cohom}_{k[G]}(V, P_{\lambda,r}) = \textup{Cohom}_{k[G_r]}(V, P_{\lambda,r}) \]
for all large $r$ and has dimension $f(V)$.
\end{lemma}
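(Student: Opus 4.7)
The plan is to use the finite-dimensionality of both $V$ and $W$ to pass from the equalizer presentation of $\Hom_C(W,V) \cong W^* \boxtimes_C V$ to the coequalizer presentation of $\textup{Cohom}_C(V,W) \cong V^* \odot_C W$ via $k$-linear duality. Concretely, I would apply $(-)^* = \Hom_k(-,k)$ to the defining equalizer
\[
\Hom_C(W,V) = \textup{eq}\bigl( W^* \otimes_k V \rightrightarrows W^* \otimes_k C \otimes_k V \bigr),
\]
whose two arrows are $\Delta_{W^*} \otimes \textup{Id}_V$ and $\textup{Id}_{W^*} \otimes \Delta_V$. Dualizing in $\mathsf{Vect}_k$ turns an equalizer into a coequalizer, and the identification $\Hom_k(W^* \otimes V, C^*) \cong V^* \otimes W \otimes C^*$---which requires only that $W^* \otimes V$ is finite dimensional, even though $C$ need not be---gives a canonical isomorphism $(W^* \otimes_k C \otimes_k V)^* \cong V^* \otimes_k C^* \otimes_k W$. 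This yields
\[
\Hom_C(W,V)^* \cong \textup{coeq}\bigl( V^* \otimes_k C^* \otimes_k W \rightrightarrows V^* \otimes_k W \bigr).
\]

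The main thing to verify is that the two arrows of this coequalizer coincide with the two arrows of the Cohom coequalizer. Using $\Hom_k(C, W) \cong C^* \otimes_k W$ (valid because $W$ is finite dimensional) together with the already-noted equality $\theta_W \equiv m_W$, the Cohom coequalizer rewrites as
\[
\textup{Cohom}_C(V,W) \cong \textup{coeq}\bigl( V^* \otimes_k C^* \otimes_k W \rightrightarrows V^* \otimes_k W \bigr)
\]
with arrows $\textup{Id}_{V^*} \otimes m_W$ and $m_{V^*} \otimes \textup{Id}_W$. The dual of $\Delta_{W^*}\colon W^* \to W^* \otimes C$ is by definition the $C^*$-action $m_W\colon C^* \otimes W \to W$ on $W$ viewed as a left $C^*$-module, and similarly the dual of $\Delta_V$ is the $C^*$-action on $V^*$. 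The step I expect to be the main obstacle is the bookkeeping: tracking the reorderings of tensor factors carefully enough to match the first arrow on one side with the first on the other (and not the second), and pinning down on which side of $V^*$ the $C^*$-action is applied. Once the matching is verified, the two coequalizers are identical and the isomorphism $\textup{Cohom}_C(V,W) \cong \Hom_C(W,V)^*$ follows.

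For the second assertion I would apply the general identity twice---once with $C = k[G]$ and once with $C = k[G_m]$, in both cases taking $W = P_{\lambda,m}$, which is finite dimensional as a finite tensor product of the finite-dimensional Frobenius twists of the $P(\lambda_i)$---to obtain
\[
\textup{Cohom}_{k[G]}(V, P_{\lambda,m}) \cong \Hom_{k[G]}(P_{\lambda,m}, V)^*, \qquad \textup{Cohom}_{k[G_m]}(V, P_{\lambda,m}) \cong \Hom_{k[G_m]}(P_{\lambda,m}, V)^*.
\]
Lemma~\ref{DonkinLemmaAnalog} asserts that for sufficiently large $m$ the two Hom spaces on the right coincide and both have dimension $f(V)$, and since $(-)^*$ preserves dimension in the finite-dimensional range, the same holds for their duals on the left, giving the claim.
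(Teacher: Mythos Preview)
Your proposal is correct and follows essentially the same route as the paper: dualize the equalizer presentation of $\Hom_C(W,V)\cong W^*\boxtimes_C V$, use finite-dimensionality of $V$ and $W$ to identify $(W^*\otimes C\otimes V)^*$ with $W\otimes C^*\otimes V^*$ (the paper's ordering; your $V^*\otimes C^*\otimes W$ differs only by the tensor-swap you already flagged as bookkeeping), and then match the two arrows by observing that $(\Delta_V)^*=m_{V^*}$ and $(\Delta_{W^*})^*=m_W$. The second part is handled identically in both, by applying the duality twice and invoking Lemma~\ref{DonkinLemmaAnalog}.
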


\begin{proof}
Taking the description of $\Hom_C(W,V) \cong V \boxtimes_C W^*$ and dualising it gives
\begin{align*}
\Hom_C(W,V)^* &= \begin{tikzcd}[scale cd=1, sep = huge, ampersand replacement=\&]\text{coeq}\,\bigg(\Big(V \otimes_k C \otimes_k W^*\Big)^* \ar[r,shift left=0.9ex,"(\text{Id}_V \otimes \Delta_{W^*})^*"]
  \ar[r,shift right=.9ex,swap,"(\Delta_{V} \otimes \text{Id}_{W^*})^*"] \& \Big(V \otimes_k W^*\Big)^*\bigg)
 \end{tikzcd} \\
 &= \begin{tikzcd}[scale cd=1, sep = huge, ampersand replacement=\&]\text{coeq}\,\bigg(V^*\otimes_k C^* \otimes_k W \ar[r,shift left=0.9ex,"\text{Id}_{V^*} \otimes (\Delta_{W^*})^*"]
  \ar[r,shift right=.9ex,swap,"(\Delta_{V})^* \otimes \text{Id}_{W}"] \& V^* \otimes_k W\bigg),
 \end{tikzcd} \\
 \end{align*}
 where we have identified $\big(V \otimes C \otimes W^*)^*$ with $V^* \otimes C^* \otimes W$. (Note that this is only valid because both $V$ and $W$ are finite-dimensional.) Comparing this to the description of $\text{Cohom}_C(V,W) \cong W \odot_C V^*$, all that remains is to observe that $(\Delta_{V})^* \equiv m_{V^*}$ and $(\Delta_{W^*})^* \equiv m_W$. The second part of the lemma follows from the first part, along with Lemma \ref{DonkinLemmaAnalog}.
\end{proof}
We now fix a projection $q: P(0) \onto L(0)$ (using Lemma \ref{f.d.-comod-to-contra} we may view this as a map of contramodules), and let $P_\lambda = \varprojlim P_{\lambda,r}$ be the inverse limit defined in $k[G]$-Contra, where the transition maps $
q_r: P_{\lambda,r} \longrightarrow P_{\lambda, {r-1}}$ are given by 
\[q_r: P_{\lambda, r} = P_{\lambda,r-1} \otimes P(0)^{(r-1)}\, \xrightarrow{\text{Id}_{P_{\lambda,r-1}} \otimes \,q^{(r-1)}}\, P_{\lambda,r-1} \otimes L(0)^{(r-1)}\, \cong\, P_{\lambda,r-1}.\]
Note that the maps $q_r$ are contramodule homomorphisms since $q$ is. We have natural projection maps $\pi_r: P_{\lambda} \longrightarrow P_{\lambda,r}$. We aim to show that $P_\lambda$ is the projective cover of $L(\lambda)$ in $k[G]$-Contra. By Lemma \ref{Posit_equivalences} we know that $P_{\lambda}$ is a projective contramodule if and only if the functor \[\text{Cohom}_{k[G]}( -,P_{\lambda})\,:\,\, k[G]\text{-Comod}^{\text{op}}\longrightarrow \text{Vect}\] is exact, and this is ultimately what we aim to show. First, we state and prove two lemmas which will aid us in showing the exactness of the aforementioned functor. 

\begin{lemma}\label{local_finiteness}
$P_\lambda$ is a projective contramodule if and only if \[\text{Cohom}_{k[G]}( -,P_{\lambda})\,:\,\,\, k[G]\text{-Comod}^{\text{op}}_{\text{f.d.}} \longrightarrow \text{Vect}\] is exact, that is, we only need to show exactness for finite-dimensional $k[G]$-comodules.
\end{lemma}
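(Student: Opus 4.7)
The ``$\Rightarrow$'' direction is trivial: a SES of finite-dimensional $k[G]$-comodules is, in particular, a SES of $k[G]$-comodules, so by Lemma \ref{Posit_equivalences}b) projectivity of $P_\lambda$ forces exactness of $\text{Cohom}_{k[G]}(-,P_\lambda)$ on the full category and hence on its f.d.\ subcategory. I focus on the non-trivial ``$\Leftarrow$''. The plan is a local finiteness reduction: given a SES $0\to M'\to M\to M''\to 0$ of $k[G]$-comodules, invoke the fundamental theorem of comodules to write $M=\varinjlim_i M_i$ as the directed union of its f.d.\ subcomodules, set $M'_i:=M_i\cap M'$ and $M''_i:=M_i/M'_i$, and observe via a short diagram chase that $M'=\varinjlim_i M'_i$ and $M''=\varinjlim_i M''_i$, that each $M''_i\hookrightarrow M''_j$ is an inclusion (its kernel is $(M_i\cap M')/M'_i=0$), and that $0\to M'_i\to M_i\to M''_i\to 0$ is a filtered system of SES of f.d.\ comodules.

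The technical core is to show that $\text{Cohom}_{k[G]}(-,P_\lambda)$ converts filtered colimits of comodules to cofiltered limits of vector spaces. Both terms of the coequalizer defining Cohom respect such limits, via the natural identification $\Hom_k(\varinjlim V_i,B)\cong\varprojlim\Hom_k(V_i,B)$, and the passage of the coequalizer quotient through the limit is controlled by a Mittag-Leffler argument. Concretely, applying the f.d.\ exactness hypothesis to $0\to V_i\to V_j\to V_j/V_i\to 0$ shows that whenever $V_i\hookrightarrow V_j$ is an inclusion of f.d.\ subcomodules the induced transition map $\text{Cohom}(V_j,P_\lambda)\twoheadrightarrow\text{Cohom}(V_i,P_\lambda)$ is surjective, supplying the ML condition that upgrades the obvious comparison map $\text{Cohom}(\varinjlim V_i,P_\lambda)\to\varprojlim\text{Cohom}(V_i,P_\lambda)$ to an isomorphism.

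Combining these ingredients, applying $\text{Cohom}(-,P_\lambda)$ to the filtered system of f.d.\ SES yields, by the hypothesis of the lemma, an inverse system of SES
\[0\to\text{Cohom}(M''_i,P_\lambda)\to\text{Cohom}(M_i,P_\lambda)\to\text{Cohom}(M'_i,P_\lambda)\to 0.\]
The transition maps on the leftmost term $\{\text{Cohom}(M''_i,P_\lambda)\}$ are surjective by the same ML argument applied to $M''_i\hookrightarrow M''_j$, so the cofiltered limit of this system of SES is again a SES of vector spaces; combined with the colimit-to-limit commutation established above, this is precisely the sequence obtained by applying $\text{Cohom}(-,P_\lambda)$ to the original SES. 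Lemma \ref{Posit_equivalences}b) then yields projectivity of $P_\lambda$.

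The main anticipated obstacle is making the ``Cohom commutes with limits'' step precise: quotient constructions do not generally commute with cofiltered limits, and one must carefully identify the image of the coequalizer map applied to the limit with the limit of the images. This is where the surjectivity of the Cohom-transition maps is doing the real work, and checking that the ML mechanism delivers equality rather than a mere inclusion is the only non-formal bookkeeping in the argument.
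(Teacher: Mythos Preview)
Your approach is sound in outline but takes a harder road than the paper and leaves one genuine issue unresolved. The paper avoids the delicate ``Cohom commutes with cofiltered limits'' step entirely by switching to the contratensor product: by Lemma~\ref{Posit_equivalences}b), projectivity of $P_\lambda$ is also equivalent to exactness of $-\odot_{k[G]} P_\lambda$, and since $\odot$ is a coequalizer of ordinary tensor products it commutes with \emph{all} colimits in its comodule variable. Because filtered colimits are exact in $\text{Vect}$ and every short exact sequence of comodules is a filtered colimit of short exact sequences of finite-dimensional ones, exactness of $-\odot_{k[G]} P_\lambda$ on finite-dimensional right comodules immediately implies exactness on all of them. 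The identification $V\odot_C P_\lambda\cong\text{Cohom}_C(V^*,P_\lambda)$ for finite-dimensional $V$ then closes the loop. No inverse-limit or Mittag-Leffler machinery is required at all.

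Your direct route through $\text{Cohom}$, by contrast, forces you to commute a coequalizer past a cofiltered limit, and the Mittag-Leffler argument you invoke (surjective transition maps) only guarantees vanishing of $\varprojlim^1$ over \emph{countable} directed sets---this is precisely the hypothesis in the paper's own Mittag-Leffler lemma. The poset of finite-dimensional subcomodules of an arbitrary $k[G]$-comodule need not have a countable cofinal subset (for instance when $k$ is uncountable and the comodule is infinite-dimensional), so as written there is a gap: you would need either to justify $\varprojlim^1=0$ for vector-space systems with surjective transitions over arbitrary filtered index sets, or to reduce to countable cofinal subsystems, and neither point is addressed. The paper's $\odot$-trick sidesteps this completely, trading an inverse-limit problem for a direct-limit one where exactness is automatic.
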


\begin{proof}
    By Lemma \ref{Posit_equivalences}, we have that the following are equivalent:
    \begin{itemize}
        \item[--] $P_\lambda$ is a projective contramodule
        \item[--] $\text{Cohom}_{k[G]}( -,P_\lambda)$ is exact
        \item[--] $P_\lambda \odot_{k[G]} -$ is exact.
    \end{itemize}
    However, every comodule is a direct limit of its finite-dimensional subcomodules, and the contratensor product commutes with colimits in the comodule argument \cite[Proof of Lemma 3.1]{positselski2021contramodules}. Thus, $P_\lambda \odot_{k[G]} -$ is exact if and only if $P_\lambda \odot_{k[G]} -$ is exact for finite-dimensional comodules.
    
    Finally, for a finite-dimensional $k[G]$-comodule, $V$, we have $P_{\lambda} \odot_{k[G]} V\,\cong\, \text{Cohom}_{k[G]}( V^*,P_{\lambda})$, via the natural identification $(V^*)^*\,\cong\,V$. So we see that $P_{\lambda}$ is projective if and only if $\text{Cohom}_{k[G]}( -,P_{\lambda})$ is exact for finite-dimensional comodules.
\end{proof}
Our next result will require some results regarding Mittag-Leffler systems, so we briefly review them now. (For more details on this see \cite[Chapter 11.3]{geoghegan2007topological}.) Let $R$ be a ring, and let $I$ be a directed set. Let $\{A_i, f_{ij}: A_j \rightarrow A_i$ for $j \geq i\}$ be an inverse system of $R$-modules (we will only need this for $R=k$). Observe that for each fixed $i$, the subsets $f_{ij}(A_j) \subset A_i,$ $j>i$ form a decreasing family of submodules. We say the system is $\textit{Mittag-Leffler}$ if this decreasing family of submodules eventually stabilises. That is, for each fixed $i$, there exists a $j>i$ such that for all $k>j$ we have $f_{ik}(A_k) = f_{ij}(A_j)$. We have the following two examples of systems which are Mittag-Leffler, both of which will be used later. 

\begin{example}
Let $I$ be a directed set, and $\{A_i, f_{ij}:A_j \rightarrow A_i, j \geq i\}$ be an inverse system of $R$-modules, then the system is Mittag-Leffler if
\begin{itemize}
    \item The $f_{ij}$ are surjective for all $i,j \in I$ where $j > i$.
    \item The $f_{ij}$ are surjective eventually, that is, there exists a $N \in I$ such that $f_{n,j}$ is surjective for all $j > n > N$.
\end{itemize} 
\end{example}
Such systems will be very useful for us in light of the following lemma and corollary.

\begin{lemma}\cite[Theorem 11.3.2]{geoghegan2007topological}
Let $I$ be a countable directed set and $\{A_i\}$, $\{B_i\}$ and $\{C_i\}$ be inverse systems of $R$-modules indexed by $I$ such that 
\[0 \longrightarrow A_i \longrightarrow B_i \longrightarrow C_i \longrightarrow 0\]
is exact for all $i$. Then if $\{A_i\}$ is Mittag-Leffler we have that
\[0 \longrightarrow \varprojlim A_i \longrightarrow \varprojlim B_i \longrightarrow \varprojlim C_i \longrightarrow 0\]
is exact.
\end{lemma}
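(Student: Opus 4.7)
The standard route is to fit the given short exact sequence of systems into the six-term exact sequence involving the first derived functor of inverse limit,
\[0 \to \varprojlim A_i \to \varprojlim B_i \to \varprojlim C_i \to \varprojlim^1 A_i \to \varprojlim^1 B_i \to \varprojlim^1 C_i \to 0,\]
so that the lemma reduces to the single vanishing statement $\varprojlim^1 A_i = 0$ for Mittag-Leffler systems indexed by a countable directed set. Left exactness of the first three terms is automatic since $\varprojlim$ is a right adjoint, so the content lives entirely in the surjectivity of $\varprojlim B_i \twoheadrightarrow \varprojlim C_i$.

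To set things up, first replace $I$ by a cofinal $\mathbb{N}$-subsequence (available because $I$ is countable and directed, and agreeing with $I$ on $\varprojlim$ and $\varprojlim^1$); so one may assume $I = \mathbb{N}$. Model $\varprojlim A_i$ and $\varprojlim^1 A_i$ as the kernel and cokernel of
\[\partial_A : \prod_{i \geq 0} A_i \longrightarrow \prod_{i \geq 0} A_i, \qquad (\alpha_i)_{i \geq 0} \longmapsto \bigl(\alpha_i - f_{i,i+1}(\alpha_{i+1})\bigr)_{i \geq 0},\]
and analogously for $B$ and $C$. The six-term sequence drops out of the snake lemma applied to the short exact sequence of two-term complexes $0 \to A^\bullet \to B^\bullet \to C^\bullet \to 0$.

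To prove $\partial_A$ is surjective, introduce the stable subsystem $A_i^{\mathrm{st}} := \bigcap_{j \geq i} f_{ij}(A_j)$, which by the Mittag-Leffler hypothesis equals $f_{i,N(i)}(A_{N(i)})$ for some finite $N(i) \geq i$. The induced transition maps $A_{i+1}^{\mathrm{st}} \to A_i^{\mathrm{st}}$ are therefore surjective, and $\varprojlim A_i^{\mathrm{st}} = \varprojlim A_i$. Restricted to this subsystem, surjectivity of $\partial$ is immediate: given $(a_i^{\mathrm{st}}) \in \prod A_i^{\mathrm{st}}$ one solves $\alpha_i - f_{i,i+1}(\alpha_{i+1}) = a_i^{\mathrm{st}}$ by recursion, starting with $\alpha_0 := 0$ and at each step lifting the required element along the surjection $A_{i+1}^{\mathrm{st}} \twoheadrightarrow A_i^{\mathrm{st}}$.

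The main obstacle is the reduction to the stable subsystem: one must show every $(a_i) \in \prod A_i$ can be written as $\partial_A(\beta) + (a_i^{\mathrm{st}})$ with $(a_i^{\mathrm{st}}) \in \prod A_i^{\mathrm{st}}$. For each fixed $i$ and all $j \geq N(i)$ the pushforward $f_{ij}(a_j)$ lies in $A_i^{\mathrm{st}}$, and a careful bookkeeping argument, iteratively absorbing non-stable components into $\partial_A$-exact terms level by level, produces such a $\beta$. This is precisely the force of Mittag-Leffler: uniform boundedness in the look-ahead distance allows an otherwise divergent telescoping sum to be truncated into well-defined finite corrections. Once this reduction is in hand, the recursive lift on the stable subsystem closes the proof.
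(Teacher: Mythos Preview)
The paper does not supply its own proof of this lemma: it is quoted verbatim as \cite[Theorem 11.3.2]{geoghegan2007topological} and used as a black box. So there is nothing in the paper to compare your argument against.

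Your proof sketch is the standard one and is correct in outline. A couple of remarks on points you left somewhat implicit. First, the reduction to a cofinal $\mathbb{N}$-chain deserves one sentence of justification: a countable directed set admits a cofinal chain (enumerate $I$ and inductively pick upper bounds), and both $\varprojlim$ and $\varprojlim^1$ are unchanged under passage to a cofinal subset. Second, the ``careful bookkeeping'' you allude to in the last paragraph is the only place with real content, and it can be packaged cleanly as follows. Consider the short exact sequence of inverse systems
\[0 \longrightarrow A_i^{\mathrm{st}} \longrightarrow A_i \longrightarrow A_i/A_i^{\mathrm{st}} \longrightarrow 0.\]
The quotient system is \emph{pro-zero}: by Mittag--Leffler, for each $i$ the map $A_{N(i)} \to A_i$ lands in $A_i^{\mathrm{st}}$, so the induced map on quotients is zero. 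For a pro-zero system over $\mathbb{N}$ one checks directly that both $\varprojlim$ and $\varprojlim^1$ vanish (given $(a_i)$, the telescoping correction $\beta_i := -\sum_{j=i}^{N(i)-1} f_{ij}(a_j)$ is a \emph{finite} sum and satisfies $\partial(\beta) = -(a_i)$). The six-term sequence then gives $\varprojlim^1 A_i \cong \varprojlim^1 A_i^{\mathrm{st}}$, and the latter vanishes by your recursive lifting argument since the transition maps on $A^{\mathrm{st}}$ are surjective. This is exactly what your final paragraph is gesturing at, but stating it via the pro-zero quotient avoids having to track individual corrections level by level.
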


\begin{corollary}\label{MLcorollary}
Let $I$ be a countable directed set and $\{A_i\}$, $\{B_i\}$, $\{C_i\}$ and $\{D_i\}$ be inverse systems of $R$-modules indexed by $I$ such that $\displaystyle 0 \longrightarrow A_i \longrightarrow B_i \longrightarrow C_i \longrightarrow D_i \longrightarrow 0$ is exact for all $i$. Suppose further that
\begin{enumerate}[label=\roman*)]
    \item $\{A_i\}$ is Mittag-Leffler,
    \item $\left\{B_i \,\big/\, \textup{Im}(A_i \hookrightarrow B_i)\right\}$ is Mittag-Leffler.
\end{enumerate}
Then $\displaystyle 0 \longrightarrow \varprojlim A_i \longrightarrow \varprojlim B_i \longrightarrow \varprojlim C_i \longrightarrow \varprojlim D_i  \longrightarrow 0$ is exact.
\end{corollary}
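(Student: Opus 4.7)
The plan is to split the four-term exact sequence into two short exact sequences and apply the preceding Mittag--Leffler lemma twice. Define $K_i := B_i \big/ \textup{Im}(A_i \hookrightarrow B_i)$, which by the given exactness is naturally isomorphic to $\textup{Im}(B_i \to C_i) = \ker(C_i \to D_i)$. The transition maps of $\{B_i\}$ and $\{C_i\}$ induce a compatible inverse system structure on $\{K_i\}$, and the original four-term exact sequence factors as two short exact sequences of inverse systems:
\[0 \longrightarrow \{A_i\} \longrightarrow \{B_i\} \longrightarrow \{K_i\} \longrightarrow 0, \qquad 0 \longrightarrow \{K_i\} \longrightarrow \{C_i\} \longrightarrow \{D_i\} \longrightarrow 0.\]

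Next, I apply the preceding Mittag--Leffler lemma to each short exact sequence. By hypothesis (i), $\{A_i\}$ is Mittag--Leffler, so the lemma yields exactness of $0 \to \varprojlim A_i \to \varprojlim B_i \to \varprojlim K_i \to 0$. By hypothesis (ii), the system $\{K_i\} = \{B_i/\textup{Im}(A_i \hookrightarrow B_i)\}$ is Mittag--Leffler, so the lemma yields exactness of $0 \to \varprojlim K_i \to \varprojlim C_i \to \varprojlim D_i \to 0$.

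Finally I splice these together. The map $\varprojlim B_i \to \varprojlim C_i$ factors as the surjection $\varprojlim B_i \twoheadrightarrow \varprojlim K_i$ followed by the injection $\varprojlim K_i \hookrightarrow \varprojlim C_i$, so its image equals $\varprojlim K_i$, which by the second short exact sequence is precisely the kernel of $\varprojlim C_i \to \varprojlim D_i$. Combined with injectivity of $\varprojlim A_i \to \varprojlim B_i$ and exactness at $\varprojlim B_i$ (both from the first short exact sequence) and with surjectivity of $\varprojlim C_i \to \varprojlim D_i$ (from the second), this establishes exactness of the full four-term inverse limit sequence. I do not anticipate any real obstacle: once the decomposition into short exact sequences is made, the argument is routine diagram chasing, and the only minor point to verify is that $\{K_i\}$ genuinely inherits a compatible inverse system structure, which is immediate from the functoriality of taking images.
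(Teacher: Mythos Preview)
Your proof is correct and follows essentially the same approach as the paper: both split the four-term sequence into the two short exact sequences $0 \to A_i \to B_i \to K_i \to 0$ and $0 \to K_i \to C_i \to D_i \to 0$ with $K_i = B_i/\textup{Im}(A_i \hookrightarrow B_i)$, apply the Mittag--Leffler lemma to each using hypotheses (i) and (ii) respectively, and then splice the resulting limit sequences together. The paper simply phrases the splicing as the identification $\varprojlim K_i \cong (\varprojlim B_i)/\textup{Im}(\varprojlim A_i \hookrightarrow \varprojlim B_i)$, which is exactly what your first short exact sequence gives.
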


\begin{proof}
For each $i$, we may form the three term exact sequences
\[ \displaystyle 0 \longrightarrow B_i \,\big/\, \textup{Im}(A_i \hookrightarrow B_i) \longrightarrow C_i \longrightarrow D_i \longrightarrow 0.\]
Since $\left\{B_i \,\big/\, \textup{Im}(A_i \hookrightarrow B_i)\right\}$ is Mittag-Leffler by assumption, we have by the lemma that 
\[\displaystyle 0 \longrightarrow \varprojlim\Big( B_i \,\big/\, \textup{Im}(A_i \hookrightarrow B_i)\big) \longrightarrow \varprojlim C_i \longrightarrow \varprojlim D_i  \longrightarrow 0\] is exact. We now require that
\[ \varprojlim\Big( B_i \,\big/\, \textup{Im}(A_i \hookrightarrow B_i)\Big) = \Big( \varprojlim B_i\Big) \,\big/\, \Big(\textup{Im}\big(\varprojlim A_i \hookrightarrow \varprojlim B_i\big)\Big),\]
but this follows (thanks to the lemma) from taking the inverse limit of the family of exact sequences
\[ \displaystyle 0 \longrightarrow A_i \longrightarrow B_i \longrightarrow B_i \,\big/\, \textup{Im}(A_i \hookrightarrow B_i) \longrightarrow 0,\]
along with the fact that $\{A_i\}$ is Mittag-Leffler. \end{proof}

We may now state the second of our two lemmas which will be used in proving Theorem \ref{P_lambda_main_thm}. 

\begin{lemma}\label{lim_commutes_with_cohom} Let $G$ be a simply connected, semisimple algebraic group over an algebraically closed field $k$ of characteristic $p$ and let $\lambda$ be a dominant weight. Let $V$ be a finite-dimensional $k[G]$-comodule. Then we have 
\[\textup{Cohom}_{k[G]}(V,P_{\lambda})\, =\, \varprojlim \textup{Cohom}_{k[G]}(V,P_{\lambda,r}).\]
\end{lemma}

\begin{proof}

Recall that
$\begin{tikzcd}[scale cd=1]\text{Cohom}_{k[G]}(V,P_{\lambda,r}) = \text{coeq}\bigg(\Hom_k(V \otimes k[G],P_{\lambda,r}) \ar[r,shift left=.75ex,"f_r"]
  \ar[r,shift right=.75ex,swap,"g_r"]
&
\Hom_k(V,P_{\lambda,r})\bigg),\end{tikzcd}$
where
   \[ f_r \equiv \Hom_k(\Delta_V,P_{\lambda,r}): \Hom_k(V \otimes k[G], P_{\lambda,r}) \longrightarrow \Hom_k(V,P_{\lambda,r})  \]
is the map induced from the coaction on the $k[G]$-comodule $V$, and
    \[g_r \equiv \Hom_k(V, \theta_{P_{\lambda,r}}) : \Hom_k\big(V, \Hom_k(k[G],P_{\lambda,r})\big) \longrightarrow \Hom_k(V,P_{\lambda,r})\]
is the map induced by the contra-action on the $k[G]$-contramodule $P_{\lambda,r}$, where we have used the tensor-hom adjunction to identify $\Hom_k\big(V, \Hom_k(k[G],P_{\lambda,r})\big)$ and $\Hom_k(V \otimes k[G], P_{\lambda,r})$.
Thus, for each $r$, we have the following four-term exact sequence: 
\[0 \rightarrow \ker(f_r - g_r) \longrightarrow \Hom_k(V \otimes k[G],P_{\lambda,r}) \xrightarrow{f_r -g_r} \Hom_k(V,P_{\lambda,r}) \longrightarrow \text{Cohom}_{k[G]}(V,P_{\lambda,r}) \rightarrow 0.\]
Suppose for now that this system of exact sequences is preserved under taking the inverse limit, i.e., the following sequence is exact:
\[0 \longrightarrow \ker(f - g) \longrightarrow \Hom_k(V \otimes k[G],P_{\lambda}) \xrightarrow{f -g} \Hom_k(V,P_{\lambda}) \longrightarrow \varprojlim \text{Cohom}_{k[G]}(V,P_{\lambda,r}) \longrightarrow 0\]
where $f := \varprojlim f_r$, $g := \varprojlim g_r$ and we have used that limits commute with the covariant Hom functor. Then we obtain that
\[\varprojlim \text{Cohom}_{k[G]}(V,P_{\lambda,r}) = \textup{coker}\Big(\Hom_k(V \otimes k[G],P_{\lambda}) \xrightarrow{f -g} \Hom_k(V,P_{\lambda}) \Big) = \text{Cohom}_{k[G]}(V,P_{\lambda})\]
and the lemma is proved. Thus, it remains to show that our system of exact sequences is preserved under inverse limits. By Corollary \ref{MLcorollary}, it suffices to show that the following hold:
\begin{enumerate}[label = \roman*)]
    \item $\big\{\ker(f_r - g_r)\big\}$ is Mittag-Leffler
    \item $\left\{\Hom_k(V \otimes k[G],P_{\lambda,r})\Big/\ker(f_r,g_r)\right\}$ is Mittag-Leffler
\end{enumerate}
where in both cases the transition maps are the obvious ones. 

We begin with $ii)$. Let $q_r:P_{\lambda,r} \onto P_{\lambda,r-1}$ denote the transition map from $P_{\lambda,r}$ to $P_{\lambda,r-1}$. Then we have the following diagram, which we claim commutes:

\[\begin{tikzcd}
\Hom_k(V \otimes k[G], P_{\lambda,r}) \arrow[dd, "q_{r_*}"] \arrow[rrr, "f_r - g_r"] &  &  & \Hom_k(V, P_{\lambda,r}) \arrow[dd, "q_{r_*}"] \\
     & & &      \\
    \Hom_k(V \otimes k[G], P_{\lambda,r-1}) \arrow[rrr, "f_{r-1} - g_{r-1}"] &  & & \Hom_k(V, P_{\lambda,r-1}) \\
\end{tikzcd}\]
where by abuse of notation, $q_{r_*}$ denotes both of the induced maps appearing on the vertical arrows in the diagram. Indeed, let $\varphi \in \Hom_k(V \otimes k[G], P_{\lambda,r}),$ then we have
\begin{align*}
q_{r_*} \circ (f_r - g_r)\big(\varphi\big) &= q_r \circ \varphi \circ \Delta_V - q_r \circ \theta_{P_{\lambda,r}} \circ \varphi \\
&= q_r \circ \varphi \circ \Delta_V -  \theta_{P_{\lambda,r-1}} \circ q_r \circ \varphi \\
&= (f_{r-1}-g_{r-1})\circ q_{r_*}\big(\varphi\big)
\end{align*}
where $q_r \circ \theta_{P_{\lambda,r}} \equiv \theta_{P_{\lambda,r-1}} \circ q_r$ precisely because $q_r$ is a contramodule homomorphism. Thus, we have that $q_{r_*}\big(\ker(f_r-g_r)\big) \subset \ker(f_{r-1}-g_{r-1})$. This gives us a factoring:
\[\begin{tikzcd}
\Hom_k(V \otimes k[G], P_{\lambda,r}) \arrow[dr, two heads] \arrow[r, two heads, "q_{r_*}"] & \Hom_k(V \otimes k[G], P_{\lambda,r-1}) \arrow[r, two heads] & \dfrac{\Hom_k(V \otimes k[G], P_{\lambda,r-1})}{\ker(f_{r-1} -g_{r-1})}. \\
& \dfrac{\Hom_k(V \otimes k[G], P_{\lambda,r})}{\ker(f_{r} -g_{r})} \arrow[ur, two heads] & 
\end{tikzcd}\]
Therefore, $\left\{\dfrac{\Hom_k(V \otimes k[G], P_{\lambda,r})}{\ker(f_{r} -g_{r})}\right\}$ is Mittag-Leffler, since the transition maps are surjective.

We are left to show $i)$, i.e., that $\{\ker(f_r-g_r)\}$ is Mittag-Leffler. To do this we will show that the transition maps $\left\{q_{r_*}|_{\ker(f_r-g_r)}\right\}$ are surjective eventually, that is, surjective for large enough $r$. To begin, consider the following diagram (note that all squares commute):

\[\begin{tikzcd}[sep=small]
0 \arrow[r] & \ker(f_r-g_r) \arrow[ddd] \arrow[r, hook] & \big[V \otimes k[G], P_{\lambda,r}\big] \arrow[ddd, two heads, "q_{r_*}"] \arrow[rr, "f_r - g_r"] && \big[V, P_{\lambda,r}\big] \arrow[r] \arrow[ddd, two heads, "q_{r_*}"] & \text{Cohom}_{k[G]}(V,P_{\lambda,r}) \arrow[r] & 0 \\
&&&&&&\\
&&&&&&\\
0 \arrow[r] & \ker(f_{r-1}-g_{r-1}) \arrow[r, hook] & \big[V \otimes k[G], P_{\lambda,r-1}\big] \arrow[rr, "f_{r-1} - g_{r-1}"] & \hspace{2ex}& \big[V, P_{\lambda,r-1}\big] \arrow[r] & \text{Cohom}_{k[G]}(V,P_{\lambda,r-1}) \arrow[r] & 0,
\end{tikzcd}\]
where $[-,-]$ is used to denote $\Hom_k(-,-)$ for typographical reasons. Let $\varphi \in \ker(f_{r-1}-g_{r-1})$. As $q_{r_*} \equiv \Hom_k(V \otimes k[G], q_r)$ is surjective there exists $\widetilde{\varphi} \in \Hom_k(V \otimes k[G],P_{\lambda,r})$ such that $q_{r_*}(\widetilde{\varphi}) \equiv \varphi$. Now, by the commutative diagram we must have \[(f_r-g_r)(\widetilde{\varphi}) \in \ker\big(q_{r_*}: \Hom_k(V,P_{\lambda,r}) \longrightarrow \Hom_k(V,P_{\lambda,r-1})\big),\] where $\ker(q_{r_*})$ consists of all linear maps $f:V \longrightarrow P_{\lambda,r}$ such that $q_r \circ f \equiv 0$, i.e. all maps $f$ such that $\text{Im}(f) \subset \ker(q_r) = P_{\lambda,r-1} \otimes \ker\big(q^{(r-1)}\big)$, where$\begin{tikzcd}[sep = small]q: P(0) \arrow[r,two heads] & L(0)\end{tikzcd}$ is our fixed projection. 

We now construct a map $\psi \in \Hom_k\big(V \otimes k[T], \ker({q_r})\big) \subset \Hom_k\big(V \otimes k[G], \ker({q_r})\big)$ with the same image under $f_r-g_r$ as $\widetilde{\varphi}$. To do so we make use of the fact that we may take $r$ to be large. Precisely, we can take $r$ large enough so that all weights of $V$ are $p^{r-1}$-restricted. Let $l = $rank$(G)$, then $k[T] \cong k[x_1,...,x_l]$. For any (finite-dimensional) $k[T]$-comodule W, let $W_\lambda = \{w \in W : \Delta_W(w) = w \otimes \lambda \}$ denote the $\lambda$-weight space. Then we have that $\dim(P(0)_{1}) = 1$, in particular, $\ker(q)_1 = \{0\}$. (See \cite[pp. 148-149]{Donkin_note_on_charcters} for the injective result, the proof in the projective case is similar.) Thus, the weights appearing in the weight space decomposition of $\ker\big(q^{(r-1)}\big)$ are all of the form $x_1^{k_1}...x_l^{k_l}$ with at least one $k_i \geq p^{r-1}.$ 

Now, fix a basis $\{v_i\}$ of $V$, and also fix a basis $\{p_j\}$ of $\ker(q_r).$ We may as well assume that the chosen bases are weight bases. Thus, let $v_i$ have weight $\lambda_i$ and $p_j$ have weight $\mu_j$  for each $i$ and $j$, where $i$, $j$ span over the indices of the basis of $V$ and $\ker(q_r),$ respectively. Now, explicitly describing $(f_r-g_r)(\widetilde{\varphi})$ on the basis $\{v_i\}$ of $V$ we have: 
\[\Big((f_r-g_r)(\widetilde{\varphi})\Big)(v_i) = \sum_{j} a_{ij}p_{j}\] 
for some $a_{ij} \in k$. We then define $\psi$ as follows (along with extension by linearity):
\begin{align*}
    \psi: V \otimes k[T] &\longrightarrow \ker{(q_r)} \\
     v_i \otimes \mu_j &\longmapsto \sum_{p_k \in \ker(q_r)_{\mu_j}}-a_{ik}p_k \\
    v \otimes f &\longmapsto 0 \text{ for all } f \in k[T] \, \big\backslash\, \text{span}\{\mu_i\}, v \in V.
\end{align*}
We claim that $\psi$ has the same image under $f_r -g_r$ as $\widetilde{\varphi}$. Firstly, $\displaystyle f_r(\psi)(v_i) = \psi\left(\lambda_i \otimes v_i\right) = 0$ for all $i$ as each $\lambda_i$ is $p^{r-1}$-restricted, so can't lie in $\text{span}\{\mu_j\}$, as from above we know the weights $\mu_j$ are not $p^{r-1}$ restricted. Next, $\displaystyle -g_r(\psi)(v_i) = \theta_{P_{\lambda,r}}\Big(\mu_j \longmapsto \hspace{-3ex}\sum_{p_k \in \ker(q_r)_{\mu_j}}\hspace{-3ex}a_{ik}p_k$ for each $j\Big)$. (Note that the minus signs in the definition of $\psi$ and in $-g_r$ cancel out.) 

Now,  recall that $\theta_{P_{\lambda,r}}: \Hom_k(k[G], P_{\lambda,r}) \longrightarrow P_{\lambda,r}$ is equivalent, after the identification $\Hom_k(k[G], P_{\lambda,r}) \cong k[G]^* \otimes P_{\lambda,r}$, to the composition \[\textup{eval}  \circ (\text{Id}_{k[G]^*} \otimes \Delta_{P_{\lambda,r}}): k[G]^* \otimes P_{\lambda,r} \longrightarrow k[G]^* \otimes P_{\lambda,r} \otimes k[G] \longrightarrow P_{\lambda,r}\] where $\textup{eval}$ is the obvious evaluation map. Thus, we have $\displaystyle -g_r(\psi)(v_i) = \sum_{j} a_{ij}p_{j}$, as required. Finally, observe that $q_{r_*}(\psi) = q_r \circ \psi \equiv 0$, by construction, thus $\widetilde{\varphi}-\psi \in \ker(f_r-g_r)$ with $q_{r_*}(\widetilde{\varphi}-\psi) = \varphi$. Therefore, $q_{r_*}: \ker(f_r-g_r) \longrightarrow \ker(f_{r-1}-g_{r-1})$ is surjective for large enough $r$. In particular, $\{\ker(f_r-g_r)\}$ is Mittag-Leffler and the proof is complete.
\end{proof}

We are now in a position to state and prove the main result. First, recall that a projective cover of an object $M$ in a category $C$ is a pair $(P,\phi)$ where $P$ is a projective object in $C$, and $\phi$ is a superfluous homomorphism. In abelian categories, in particular in $k[G]$-Contra, this is equivalent to $\ker(\phi)$ being a superfluous subobject of $P$. That is, if $H \subseteq P$ is a subobject of $P$ with $H + \ker(\phi) = P$, then $H  = P$

\begin{theorem}\label{P_lambda_main_thm}
Let $G$ be a simply connected, semisimple algebraic group over an algebraically closed field $k$ of characteristic $p$ and let $\lambda \in X_+(T)$ be a dominant weight.

Assume that $G$-structures $P(\lambda)$ exist on $P_1(\lambda)$, the $G_1$-projective covers of $L_1(\lambda)$, for each p-restricted weight $\lambda \in X_1(T)$ (see Remark \ref{HumVermaconjecture}).

Then the $k[G]$-contramodule $P_{\lambda}$ is the projective cover of $L(\lambda)$ in the category $k[G]$-Contra.
\end{theorem}

\begin{proof}
Write $\displaystyle \lambda = \sum_{i=0}^sp^i\lambda_i$, with each $\lambda_i \in X_1(T)$ a $p$-restricted weight. Since $P_{\lambda} = \varprojlim P_{\lambda,r}$ and $\displaystyle P_{\lambda,r}\big/\textup{Rad}_{G_r}(P_{\lambda,r})$ is isomorphic to  $L_r(\lambda)$ as $G_r$-modules (or equivalently $k[G_r]$-contramodules) for each $r > s$, we have that ${\displaystyle {P_{\lambda}\, \big/\textup{ Rad}_{G}(P_{\lambda}) \cong L(\lambda)}}$ in the category $k[G]$-Contra, where we have defined $\textup{Rad}_{G}(P_{\lambda}) := \varprojlim\textup{Rad}_{G_r}(P_{\lambda,r})$. Now, suppose that $M \subset P_{\lambda}$ is a subcontramodule such that $M + \textup{Rad}_{G}(P_{\lambda}) = P_{\lambda}$. Applying the natural projection $\pi_r$ gives $\pi_r(M) + \textup{Rad}_{G_r}(P_{\lambda,r}) = P_{\lambda,r}$. This implies that $\pi_r(M) = P_{\lambda,r}$ for all $r$, since each $\textup{Rad}_{G_r}(P_{\lambda,r})$ is superfluous in $P_{\lambda,r}$. Therefore we have $M = P_{\lambda}$ and so $\textup{Rad}_{G}(P_{\lambda})$ is superfluous in $P_{\lambda}$. 

All that remains is to show that $P_{\lambda}$ is a projective $k[G]$-contramodule. By Lemma \ref{local_finiteness} we only have to show that $\text{Cohom}_{k[G]}( -,P_{\lambda})$ is exact for finite-dimensional comodules. However, by Lemma \ref{lim_commutes_with_cohom} we have, for a given finite-dimensional comodule $V$,
\[\textup{Cohom}_{k[G]}(V,P_{\lambda})\, =\, \varprojlim \textup{Cohom}_{k[G]}(V,P_{\lambda,r}).\]
Now, we know by Lemma \ref{duality_lemma} that for large enough $r$, $\textup{Cohom}_{k[G]}(V,P_{\lambda,r})$ has dimension $f(V)$, thus, \[\textup{dim}\big(\textup{Cohom}_{k[G]}(V, P_{\lambda})\big) = f(V).\]
Finally, as $f$, the composition multiplicity of $L(\lambda)$, is additive on short exact sequences of finite-dimensional comodules we conclude that $\textup{Cohom}_{k[G]}( -,P_{\lambda})$ is exact on (short exact sequences of) finite-dimensional comodules, thus $P_{\lambda}$ is projective.
\end{proof}


The existence of projective covers of $G$-modules (considered as $k[G]$-contramodules) may also be recovered from the inverse limit theorem along with the ``dual functor" from $k[G]$-comodules to $k[G]$-contramodules. Specifically, the result is as follows:
\begin{theorem}\label{Donkinconj}
Let $G$ be a simply connected semisimple algebraic group. Let $I(\lambda)$ be the rationally injective $G$-envelope of $L(\lambda)$. Suppose $I(\lambda) = \varinjlim_i X_i$, where the $X_i$ are finite-dimensional $G$-modules. Then $P(\lambda^*)$, the $k[G]$-Contra projective cover of $L(\lambda)^*$, the dual module of $L(\lambda)$, is isomorphic to $\varprojlim_i X_i^*$, as contramodules.    
\end{theorem}

Observe that if $L(\lambda)$ is a left module, then $L(\lambda)^*$ is naturally a right module. We will not convert from right to left via the use of the inverse $g \mapsto g^{-1}$, and so for the rest of this section the $L(\lambda)^*$ should be considered as left contramodules, and so the projective covers we construct should be left contramodules. Of course, all previous results hold with a careful modification of lefts and rights, and anyway, one may use the antipode in $k[G]$ to work on the right if one wishes. First, before proving the theorem, we give two lemmas which will help with the proof. 

\begin{lemma}\label{direct inverse limit duality}
Let $(V_i,\alpha_{ij})$ be a direct system of finite-dimensional right $k[G]$-comodules. Then $(V_i^*, \alpha_{ij}^*)$ is an inverse system of left $k[G]$-contramodules. Moreover, 
$ \displaystyle \Big(\varinjlim_i V_i \Big)^* \cong \varprojlim_i V_i^*$ as left $k[G]$-contramodules.
\end{lemma}

\begin{proof} The first part follows by applying the functor $\Hom_k(-,k) : k[G]$-Comod $\rightarrow k[G]$-Contra$_\textit{left}$ as discussed in Section \ref{Section-contra-defns} (with lefts and rights swapped).

For the last part, we have $\displaystyle \varinjlim_i V_i = \Big(\bigoplus_i V_i\Big)\Big/{\big(v_i - \alpha_{ij}(v_i), j > i, v_i \in V_i\big)}$. We define the kernel $\displaystyle (K,\kappa) = \ker\big(\bigoplus_i V_i \longrightarrow \varinjlim_i V_i \big)$, then we have a short exact sequence of right $k[G]$-comodules
\[0 \longrightarrow K \xrightarrow{\kappa} \bigoplus_i V_i \longrightarrow \varinjlim_i V_i \rightarrow 0.\]
The functor $(-)^* = \Hom(-,k): k[G]\text{-Comod} \longrightarrow k[G]\text{-Contra}_\textit{left}$ is exact. Applying it gives a short exact sequence of left $k[G]$-contramodules
\[ 0 \longrightarrow \big(\varinjlim_i V_i \big)^* \longrightarrow \prod_i V_i^* \xrightarrow{\kappa^*} K^* \longrightarrow 0.\]
Thus, we see that $\displaystyle \big(\varinjlim_i V_i \big)^* = \ker\Big(\prod_i V_i^* \xrightarrow{\kappa^*} K^*\Big).$ On the other hand, suppose $(f_k) \in \ker(\kappa^*)$, then $\big(\kappa^*(f_k)\big)\big(v_i - \alpha_{ij}(v_i)\big) = 0$ for all $v_i \in V_i$ for all $j \geq i$, or equivalently, $f_i = \alpha_{ij}^*f_j$ for all $j \geq i$. Therefore we have
\[\displaystyle \big(\varinjlim_i V_i \big)^* = \left\{(f_k) \in \prod_i V_i^* : f_i = \alpha_{ij}^*(f_j) \text{ for all } j > i  \right\} = \varprojlim_i \left( V_i^*, \alpha_{ij}^* \right).\]
\end{proof}

\begin{lemma}\label{inj co proj contra}
Let $I$ be an injective right $k[G]$-comodule. Then $\Hom_k(I,k)$ is a projective left $k[G]$-contramodule
\end{lemma}

\begin{proof}
Recall, that as $I$ is injective, the coaction map $I \longrightarrow I \otimes k[G]$ splits. Applying the functor $\Hom_k(-,k): \text{Comod-}k[G] \longrightarrow k[G]\text{-Contra}_{\textit{left}}$ to the coaction map gives a split map of (left) contramodules $\Hom_k(I \otimes k[G],k) \longrightarrow \Hom_k(I,k)$. 

However, observe that we have $\Hom_k(I \otimes k[G],k) \cong \Hom_k\big(k[G],\Hom_k(I,k)\big)$ as contramodules, which is free. Thus, $\Hom_k(I,k)$ is a direct summand of a free contramodule, hence projective.

\end{proof}

We can now give a proof of Theorem \ref{Donkinconj}.

\begin{proof}
To begin, $I(\lambda)$ is an injective right $k[G]$-comodule, and so Lemma \ref{inj co proj contra} implies that $I(\lambda)^*$ is a projective left $k[G]$-contramodule. To determine the radical of $I(\lambda)^*$ one may have hopes of using an analog of the fact that for a (Noetherian) ring $R$ and finitely generated $R$-module $M$ we have $\text{rad}(M^*) = \big(M/\text{soc}(M)\big)^*$. However, unlike the case of finitely generated modules, the functor $\Hom_k(-,k): k[G]$-Comod$^\text{op}\longrightarrow k[G]$-Contra$_\textit{left}$ is not a duality, and so we do not get the result immediately from this. 

Instead, recall that $I(\lambda) = \varinjlim_i X_i$, where each $X_i$ is a finite-dimensional subcomodule of $I(\lambda)$. For each $i$ we have a short exact sequence
\[0 \longrightarrow \textup{soc}(X_i) \longrightarrow X_i \longrightarrow X_i\big/\textup{soc}(X_i) \longrightarrow 0. \]
Now, since each $X_i$ is a subcomodule of $I(\lambda)$ we must have $\text{soc}(X_i) = {0}$ or $\text{soc}(X_i) = L(\lambda)$. It follows immediately that $\varinjlim_i \text{soc}(X_i) = \text{soc}(\varinjlim_i X_i) = \text{soc}(I(\lambda)) = L(\lambda)$, as colimits commute with colimits. Lastly taking direct limits of $k[G]$-comodules is exact. Altogether, this gives us an exact sequence
\[0 \longrightarrow L(\lambda) \longrightarrow I(\lambda) \longrightarrow \varinjlim_i \Big(X_i \big/ \textup{soc}(X_i)\Big) \longrightarrow 0. \]
Next, applying $\Hom_k(-,k):k[G]\text{-Comod}^{\text{op}} \longrightarrow k[G]\text{-Contra}_\textit{left}$ along with Lemma \ref{direct inverse limit duality} yields
\[ 0 \longrightarrow \varprojlim_i \Big(X_i \big/ \textup{soc}(X_i)\Big)^* \longrightarrow I(\lambda)^* \longrightarrow L(\lambda)^* \longrightarrow 0.\]
Now, the functor $\Hom_k(-,k):k[G]$-Comod$_{\textit{f.d.}}^\text{op} \longrightarrow k[G]$-Contra$_{\textit{left, f.d.}} \textit{ is }$a duality. Since the $X_i$ are finite-dimensional we have $\displaystyle \big(X_i \big/ \textup{soc}(X_i)\big)^* \cong \textup{rad}(X_i^*)$. Once again, since the $X_i$ are subcomodules of $I(\lambda)$ we must have that $\textup{rad}(X_i^*) = X_i^*$ or $\textup{rad}(X_i^*) \cong \big(X_i \big/ L(\lambda)\big)^*$ and we deduce that $\varprojlim_i \textup{rad}(X_i^*) = \textup{rad}(\varprojlim_i X_i^*)$, as limits commute with limits. Finally, lemma \ref{direct inverse limit duality} gives us that $ \varprojlim_i X_i^* = \Big(\varinjlim_i X_i\Big)^* = I(\lambda)^*$. Plugging this into the above sequence yields 
\[0 \longrightarrow \textup{rad}\big(I(\lambda)^*\big) \longrightarrow I(\lambda)^* \longrightarrow L(\lambda)^* \longrightarrow 0.\]
Thus, $I(\lambda)^*$ is the projective cover of $L(\lambda)^*$ in $k[G]$-Contra$_\textit{left}$.
\end{proof}

\printbibliography

\end{document}